 \numberwithin{equation}{section}
   \newtheorem{thmalph}{Theorem}
 \newtheorem{lemmab}[thmalph]{Lemma}
\newtheorem{theorem}{Theorem}[section]
\newtheorem*{theorem*}{Theorem }
\newtheorem{proposition}[theorem]{Proposition}%[section]
\newtheorem{lemma}[theorem]{Lemma}%[section]
\DeclareMathOperator{\tr}{tr}
\DeclareSymbolFontAlphabet{\mathbb}{AMSb}
 \def\equationautorefname~#1\null{(#1)\null}
\newcommand{\C}{\mathbb C}
\newcommand{\R}{\mathbb R}
\renewcommand{\tr}{\mathbf{tr}}
\newcommand*\diff{\mathop{}\!\mathrm{d}}
      \DeclareMathOperator{\SO}{SO}
      \DeclareMathOperator{\cl}{C\ell}  %real Clifford
\DeclareMathOperator{\Spin}{Spin}
\newcommand{\fsl}[1]{{\slashed{#1}}} % For Dirac op, spinors disctributions
 \newcommand{\hb}[1]{\textcolor{blue}{#1}}
\newcommand\blfootnote[1]{%
  \begingroup
  \renewcommand\thefootnote{}\footnote{#1}%
  \addtocounter{footnote}{-1}%
  \endgroup
}
\begin{document}

\title[Strichartz's  conjecture for spinors]
{
%\kk{An $L^2$-range characterization of the generalized spectral projections related to the Dirac operator} 
Strichartz's conjecture for the  spinor bundle\\ over the real hyperbolic space}

\author{Abdelhamid Boussejra}
\address{Abdelhamid Boussejra : Department of Mathematics, Faculty of Sciences, University Ibn Tofail, Kenitra, Morocco 
}
\curraddr{}
\email{boussejra.abdelhamid@uit.ac.ma}
\thanks{}

\author{Khalid Koufany}
\address{Khalid Koufany : Université de Lorraine, CNRS, IECL, F-54000 Nancy, France}
 
\curraddr{}
\email{khalid.koufany@univ-lorraine.fr}
\thanks{}
\date{\today}
 	
 \maketitle

\begin{abstract}  %Version \today\\
\begin{sloppypar}
Let $H^n(\mathbb R)$ denote the real hyperbolic space realized as the symmetric space $\Spin_0(1,n)/\Spin(n)$. In this paper, we provide a characterization for the image of the Poisson transform for $L^2$-sections
  of the spinor bundle over the boundary $\partial H^n(\mathbb R)$. As a consequence, we obtain an $L^2$ uniform estimate for   the generalized   spectral projections associated to 
 the spinor bundle over   $H^n(\mathbb R)$, thereby extending Strichartz’s conjecture from the scalar case to the spinor setting.
 %thus generalizing Strichartz's conjecture from the scalar case.
 \end{sloppypar}
  \end{abstract}
\blfootnote{\emph{Keywords : \rm  Strichartz's conjecture,   Spinor bundle, Poisson transform,    Helgason-Fourier transform, generalized spectral projections.}} 
\blfootnote{\emph{2010 AMS Classification : \rm 43A90, 43A85, 58J50, 15A66}} 

\tableofcontents
 
\section{Introduction}  
 Let $X=G / K$ be  a Riemannian symmetric space  of noncompact type. Fix an Iwasawa decomposition $G=KAN$ of $G$ and let $\mathfrak a$ be the Lie algebra of $A$. We write $g=\kappa(g){\rm e}^{H(g)}n(g)$ according  to the given Iwasawa decomposition.  
For $f \in L^2(X)$ we   consider the  generalized spectral  projection  $f_\lambda=\mathcal{Q}_\lambda f \in C^{\infty}(X)$  defined for almost every $\lambda \in \mathfrak{a}^*$  by
$$
f_\lambda(g)=\mathcal{Q}_\lambda f(g)=|{c}(\lambda)|^{-2} \int_K e^{-(i \lambda+\rho)(H(g^{-1}k)} \mathcal{F} f(\lambda, k) d k,
$$   
where $ \mathcal{F} f$ is the Helgason Fourier transform of $f$, $c(\lambda)$ the Harish-Chandra $c$-function and $\rho$ the half sum of   positive restricted roots.  
Then one may see that the function $f_\lambda$ is a joint  eigenfunction  for    the algebra $\mathbb{D}(X)$ of $G$-invariant differential operators on $X$.

In his  paper  \cite{strichartz}, Strichartz initiated the project   of reformulating harmonic
analysis in terms of  the spectral projections $\mathcal Q_\lambda$. He 
  investigated relations between properties of $f$ and $f_\lambda$ and  
    characterized the image    under the spectral projections of  the spaces $L^2(X)$, $\mathcal S(X)$, $\mathcal S^{\prime}(X)$ and $\mathcal D(X)$   when $X$ is the Euclidean space or  the real hyperbolic space.

Strichartz proposed then   the following  conjecture for any Riemannian symmetric space  of noncompact type and rank $r$ :\\

{\bf Strichartz's conjecture } (see \cite[Conjecture 4.5, Conjecture 4.6]{strichartz}) 
 Let $\lambda\in\mathfrak a^*_{\text{reg}}$. For $f_\lambda=\mathcal Q_\lambda f$ with $f \in L^2(X)$   we have
\begin{align}\label{st0}
\|f\|_2^2  \approx  \sup _{t, z} \int_{\mathfrak{a}_{+}^*} \frac{1}{t^r} \int_{B_t(z)}\left|f_\lambda(x)\right|^2 \diff x \diff \lambda
\end{align}
and for any fixed $z$,
\begin{align}\label{st1}
\|f\|_2^2=2^r \pi^{r / 2} \Gamma\left(\frac{r}{2}+1\right) \lim _{t \rightarrow \infty} \int_{\mathfrak{a}_{+}^*} \frac{1}{t^r} \int_{B_t(z)}\left|f_\lambda(x)\right|^2 \diff x \diff \lambda,
\end{align}
where $B_t(z)$ is the open ball in $X$ centered at $z$ with radius $t$  and $\Gamma$ is the classical Euler Gamma function.
 
  Conversely, if $f_\lambda$ is any family of joint eigenfunctions of $\mathbb{D}(X)$ such that    the right-hand side of \eqref{st0} or \eqref{st1} is finite, then  there exists $f\in L^2(X)$ satisfying  $f_\lambda=\mathcal Q_\lambda f$. \\

The above  characterization  of $\mathcal Q_\lambda \left(L^2(G/K)\right)$ is related to  the image characterization of $L^2(K/M)$ under the Poisson transform $\mathcal P_\lambda$, and therefore closed  to the Helgason conjecture stated in \cite{H1}.  Here $M$ is the centralizer of $A$ in $K$.

  Continuing Strichartz's   project many authors   proved a spectral projections image characterization of $L^2(G/K)$ and Poisson transform image characterization of $L^2(K/M)$. For rank one case we mention e.g.  \cite{IB, Obray,Ionescu, BS}.  
    For a  treatment of the case of any Riemannian symmetric space  of noncompact type  we refer to \cite{Kaizuka}.  A recent account for a class of vector bundles  can be found in\cite{BIO, BO,BBK2,BBK, BK-24}.

The present paper deals with   Strichartz’s conjecture on Poisson transforms and the  generalized spectral projections for  the spinor bundle  over   the real hyperbolic space $H^n(\mathbb R)$. We consider the realization of $H^n(\mathbb R)$ as  the homogeneous space $G/K$, where $G = \Spin_0(1,n)$ is the spinor conformal group and $K = \Spin(n)$ its maximal compact subgroup. 

Fix $\tau$ to be the spin representation $\tau_n$ ($n$ odd) or the half-spin representation $\tau_n^\pm$ ($n$ even). 
  Let $G\times_K V_\tau$ denote  the spinor bundle associated to $\tau$. Its $L^2$-sections space will be identified to the space $L^2(G,\tau)$ consisting of functions $f: G\to V_\tau$ which are square-integrable  with respect to the invariant measure of $G/K$ and satisfy  equivariance condition $f(gk)=\tau(k^{-1})f(g)$, for all $g\in G$ and $k\in K$.
%Let $L^2(G,\tau)$ be the space of $L^2$-sections of   the spinor bundle $G\times_K V_\tau$.
Then according to the Helgason Fourier inversion formula (see \cite{CP}), any $f\in C_c^\infty(G,\tau)$ 
can be written as
 \begin{align}\label{decomp}
 f=\sum_{\sigma\in\widehat{M}(\tau)}\int_0^\infty \mathcal Q_{\sigma,\lambda}^\tau f \diff \lambda,
 \end{align}
 where  
 $\widehat{M}(\tau)$ is the set of  unitary irreducible representations of $M=\Spin(n-1)$   that occur in the restriction $\tau_{|M}$ and  $\mathcal Q^\tau_{\sigma,\lambda}$ is   the  partial generalized spectral projection   which can be written by means of  the partial Helgason Fourier transform $\mathcal F_{\sigma,\lambda}^\tau$ (see \eqref{12-dec-F} and \eqref{proj-Q1})  and the Poisson transform $\mathcal P_{\sigma,\lambda}^\tau$ (see \eqref{Poisson}) through the relation \footnote{ Notice that the operator $\mathcal{Q}_{\sigma, \lambda}^\tau$ is well defined on $L^2(G,\tau)$ by the Plancherel Theorem.}
 $$
\mathcal{Q}_{\sigma, \lambda}^\tau f(g)=\nu_\sigma(\lambda)\left(\mathcal{P}_{\sigma, \lambda}^\tau\left(\mathcal{F}_{\sigma, \lambda}^\tau f(\cdot)\right)(g).\right.
$$
Here $\nu_\sigma(\lambda)$ is the Plancherel density.

 Let $\mathcal E_{\sigma,\lambda}(G,\tau)$ denote the space of joint eigenfunctions of the of algebra of $G$-invariant differential operators acting on the bundle $G\times_K V_\tau$, with eigenvalue $\lambda^2$ if $n$ is even and $\pm\lambda$ if $n$ is odd, (see \eqref{g1}, \eqref{g2}).
 Then it is known that the image of the Poisson transform belongs to   the joint eigenspace $\mathcal{E}_{\sigma,\lambda}(G,\tau)$, see e.g. \cite{CP}. 
Consequently, the decomposition \eqref{decomp} may be seen as a spectral decomposition of $L^2$-sections $f$ into the family of joint eigensections $\mathcal Q_{\sigma,\lambda}^\tau f$. 
 
Our first main theorem is a generalization of Strichartz's conjecture   for the generalized spectral projections  $Q_\lambda^\tau =
(\mathcal Q_{\sigma,\lambda}^\tau)_{\sigma\in\widehat{M}(\tau)}: L^2(G,\tau)\to \sum_{\sigma\in\widehat{M}(\tau)} \mathcal{E}_{\sigma,\lambda}(G,\tau)$, see Theorem \ref{main-th-proj}. %More precisely %\sout{In particular} we obtain  a characterization of $\mathcal{Q}^\tau\left(L^2(G,\tau)\right)$ \sout{the image of $L^2(G,\tau)$ by the spectral projections $(\mathcal{Q}^\tau_\lambda)_{\lambda\in (0,\infty)}$.}

To prove Theorem \ref{main-th-proj}, we will begin by examining a uniform $L^2$-boundedness %and characterizing the image of $L^2(K,\sigma)$ under the Poisson transform $\mathcal P_{\sigma,\lambda}^\tau$, %\sout{we will first study a uniform $L^2$-boundedness and the image characterization of $L^2(K,\sigma)$ by the Poisson transform,} 
and characterize the joint eigensections in $\mathcal{E}_{\sigma,\lambda}(G,\tau)$ which are Poisson transform of $L^2$-sections of the spinor bundle $K\times_M V_\sigma$.
 This objective is addressed in the second main theorem of this paper, see Theorem \ref{main-th-Poisson}, which focuses on Strichartz's conjecture for the Poisson transform. %\sout{This is the purpose of the second main theorem of this paper on Strichartz's conjecture for the Poisson transform, see Theorem \ref{main-th-Poisson}. }

 For $\lambda\in\mathbb C$ with $\Re(i\lambda)>0$, a characterization of $\mathcal P_{\sigma,\lambda}^\tau\left(L^2(K,\sigma)\right)$ has been obtained in \cite{BBK2} (see also \cite{BBK} for differential forms)  using essentially a Fatou type theorem,
\begin{align*}
\lim_{t\rightarrow \infty}{\rm e}^{(\rho-i\lambda)t}\mathcal{P}_{\sigma,\lambda}^\tau\, f(ka_t)= \frac{\dim\tau}{\dim\sigma}  \mathbf c(\lambda,\tau) f(k),
\end{align*}
where $\mathbf c(\lambda,\tau)$ is the spinor-valued Harish-Chandra $c$-function.
 Notice that the above asymptotic is not valid for real parameter $\lambda$, since  $\mathcal{P}_{\sigma,\lambda}^\tau\, f(ka_t)$ has  oscillating terms at infinity.

 To overcome this problem for real parameters $\lambda$, we will  establish   a scattering formula   for   Poisson integrals, see Theorem \ref{asymp-Poisson}.  This idea goes back to \cite{Kaizuka}.  \\

  %%%%%%%%%%%%
 We now give a summary of the results in this paper. 
 Following the general preliminaries in Section 2, we introduce the Helgason Fourier transform on the spinor bundle in Section 3 and establish a Helgason Fourier restriction theorem.
  As a consequence, we obtain in section 4  a uniform estimate for  the Poisson transform. Then in Section 5 we give the precise  announcement of the main theorems. Section 6   deals with a technical proof of the asymptotic expansion of $\tau$-spherical functions from which we get the asymptotic expansion of Poisson integrals. Section 7 is devoted to the proof of the main    Theorem \ref{main-th-Poisson} and Section 8 to the proof of the main    Theorem \ref{main-th-proj}. The  appendix contains  technical lemmas using a realization of the conformal spin group by Vahlen matrices.
  
 % Several proofs will be abbreviated, as similar arguments to those used in \cite{BK-24} for    the bundle of differential forms over $H^n(\mathbb R)$   can be adapted here.

\section{Notations and Preliminaries}

 Let $H^{n}(\mathbb{R})$ be the $n$-dimensional real hyperbolic space ($n \geq 2$)
  viewed as the rank one symmetric space of the noncompact type $G/K$ where $G=\Spin_0(1,n)$ and $K=\Spin(n)$ are the two-fold coverings of $\SO_0(1,n)$ and $\SO(n)$ respectively.

 Let $\mathbf {\mathfrak g}\simeq\mathfrak{so}(1,n)$ and $\mathbf {\mathfrak k}\simeq\mathfrak{so}(n)$ be the Lie algebras of $G$ and $K$ respectively  and write $\mathfrak g=\mathfrak k\oplus \mathfrak p$ for the Cartan decomposition of $\mathfrak g$.
 The tangent space $T_o(G / K) \simeq   \mathfrak{p}$ of $G / K=H^n(\mathbb{R})$ at the origin $o=e K$ will be identified with the vector space $\mathbb{R}^n$.

Let  $H_0\in\mathfrak p$ such that 
%Let
%$$
%H_0=\left(\begin{array}{ccc}
%0 & 0 & 1 \\
%0 & 0_{n-1} & 0 \\
%1 & 0 & 0
%\end{array}\right) \in \mathfrak{p}
%$$
%Then 
$\mathfrak{a}=\mathbb{R} H_0$ is a Cartan subspace in $\mathfrak{p}$. Let $A=\exp\mathfrak a=\{a_t= e^{tH_0},\; t\in\mathbb R\}$ be the corresponding analytic Lie subgroup of $G$.
Let $\alpha \in \mathfrak{a}^*$ such  that $\alpha\left(t H_0\right)=t$.  
The positive restricted root subsystem is $\Sigma^{+}(\mathfrak{g}, \mathfrak{a})=\{\alpha\}$ and the corresponding positive Weyl chamber is $\mathfrak a^+=\{H\in\mathfrak a,\; \alpha(H)>0\}\simeq (0,\infty)$.   We will identify $\mathfrak a_{\mathbb C}^*$ and $\mathbb C$ via the map $\lambda \alpha \mapsto \lambda$. Thus, the half-sum of positive roots is $\rho=(n-1)\alpha/2 = (n-1)/2$.

Let $\mathfrak{n}=\mathfrak{g}_\alpha$ be the   positive root subspace and $N=\exp(\mathfrak n)$ the corresponding abelian analytic subgroup of $G$.  The groupe $G$ has an Iwasawa decomposition $G=KAN$
and a Cartan decomposition
$G=K\overline{A^+}K$ where  $\overline{A^+}$ is the closure  in $G$ of $A^+=\exp(\mathfrak a^+)=\{a_t,\; t>0\}$.
For any $g\in G$, we denote $H(g)\in\mathfrak a$ and $A^+(g)\in\overline{\mathfrak a^+}$ the unique elements such that
$$g=k(g) e^{H(g)} n(g),\;\; k(g)\in K, n(g)\in N,$$
and
$$g=k_1(g) e^{A^+(g)} k_2(g),\; k_1(g), k_2(g)\in K.$$
 
 %Let $\diff g$ and $\diff k$ be the Haar measures on $G$ and $K$ respectively.  Using  the Cartan decomposition, the following integral formula holds for any integrable function $f$ %the Haar measure on $G$ writes
%  \begin{equation}\label{chg-cartan}
%\int_G   \diff g f(g)=\int_K \diff k_1 \int_0^{\infty} \diff t(2 \sinh t)^{n-1} \int_K \diff k_2 f\left(k_1 a_t k_2\right).
%\end{equation}

 We define the invariant measure on $G/K$ by %In the Cartan decomposition, the Haar measure on $G$ writes
  \begin{equation}\label{chg-cartan}
\int_{G/K}   \diff (gK) f(gK)  =\int_0^{\infty} \diff t(2 \sinh t)^{n-1} \int_K \diff k f\left(k a_t \right).
\end{equation}
for any integrable function $f$  on $G/K$. Here $K$ is equipped with its normalized Haar measure.\\

Let $\cl(n)=\cl(\mathbb R^n)$ denotes the Clifford algebra of $\mathfrak p\simeq \mathbb R^n$ (see the appendix for more details).
 A Clifford module $(\tau, V_\tau)$ is a complex vector space $V_\tau$   together with a (left) action $\tau$ of the Clifford algebra $\cl(n) $ on $V_\tau$  (the space of spinors associated with the complexfication of $\mathfrak p=T_o(G/K)$). By restriction, the action $\tau$ yields representations of the group   $\Spin(n)$, also denoted by $\tau$.

When $n$ is even, say $n=2 m$, there exists, up to equivalence, a unique irreducible Clifford module $(\tau_n,V_{\tau_n})$ of dimension $2^m$. As a representation of $\Spin(n)$, $\tau_n$  splits into two irreducible non-equivalent representations, the half spinors representations $(\tau_n^+,V_{\tau_n^+})$ and $(\tau_n^-,V_{\tau_n^-})$, each of dimension $2^{m-1}$.

When $n$ is odd, say $n=2 m+1$, there exist two non-equivalent irreducible Clifford modules of dimension $2^m$. As representations of the  $\Spin(n)$, they are irreducible and equivalent, thus leading to a unique spinor representation  $(\tau_n,V_{\tau_n})$.

We will hereafter simply denote
$$\Lambda=\{\tau_n (\text{$n$ odd}), \tau_n^\pm (\text{$n$ even})\}.$$

%Let $\tau\in\Lambda$, then   the spinor   bundle over $H^n(\mathbb R)$ is the homogeneous vector bundle $G \times{ }_{K} V_{\tau}$.\hb{remplacer la phrase précédente par :\\ 
 
 The spinor bundle  is defined as    the   homogeneous vector bundle $G\times_K V_\tau$ over $G/K$ corresponding to $\tau\in \Lambda$.
    We identify its smooth sections space $C^{\infty}\left(G\times_K V_\tau\right)$ with the space $C^\infty(G,\tau)$ of smooth functions $f \colon G\to V_\tau$ such that 
    \begin{equation}\label{t-equiv}
    	f(g k)=\tau\left(k^{-1}\right) f(g)\quad  \text{for all $g \in G$ and $k \in K$.}
    \end{equation} 
     Since $K$ is compact, we also identify 
    $C_{\mathrm{c}}^{\infty}\left(G\times_K V_\tau\right)$ with $C_{\mathrm{c}}^{\infty}(G ; \tau)$. Here $C^\infty_c$ means $C^\infty$ with compact support. Similarly, we denote by $L^2(G ; \tau)$ the space of $V_\tau$-valued $L^2$-functions on $G$ satisfying the transformation rule \eqref{t-equiv}.

 Let $M\simeq \Spin(n-1)$ be the centralizer of $A$ in $K$. 
  To distinguish between spinor representations of $K=\Spin(n)$ and spinor representations of $M=\Spin(n-1)$ we will 
  henceforth use the Greek letter   $\sigma$ to denote  representations of    $M$. 
  Thus the spinor representation of $M=\Spin(n-1)$ is denoted by $\sigma_{n-1}$ (when $n-1$ is odd) and the half-spinor representations are denoted by $\sigma_{n-1}^\pm$ (when $n-1$ is even).

 Recall the following branching law of $(K,M)=(\Spin(n),\Spin(n-1))$, see e.g. \cite[Chapter 8]{GW}.
 
\begin{enumerate}%[\upshape 1)]
\item[$(1)$]  If $n$ is even,   then   ${\tau_n^\pm}_{|M} = \sigma_{n-1}$.
\item[$(2)$] If $n$ is odd,   then ${\tau_n}_{|M}=\sigma^+_{n-1}\oplus \sigma^-_{n-1}$. 
  \end{enumerate}
   
For $\tau\in\Lambda$ we denote by  $\widehat{M}(\tau)$ the set of representations in $\widehat{M}$ that occur in the restriction of $\tau$ to $M$ with multiplicity one according to  above branching rule. Thus $\sigma \in \widehat{M}(\tau)$ means $\sigma=\sigma_{n-1}$ or $\sigma=\sigma^\pm_{n-1}$ in accordance with the parity of $n$.

%\hb{ je propose d'enlever ce paragraphe :
%If we fix $\tau\in\Lambda$, then for any   $\sigma\in\widehat{M}(\tau)$ we  consider the vector bundle $K\times_M V_\sigma$ over the boundary $\partial H^n(\mathbb R)=$ $K/M$. We identify  the space of smooth sections of the given bundle with the space $C^\infty(K,\sigma)$ of smooth   $F : K\to V_\sigma$ such that
%$$F(km)=\sigma(m)^{-1} F(k) \quad \text{ for all } k\in K, m\in M.$$
% We will also denote by $L^2(K,\sigma)$ (resp. $C^{-\omega}(K,\sigma)$) the space of square integrable $V_\sigma$-valued functions (resp. $V_\sigma$-valued hyperfunctions) on $K$ satisfying the above covariant relation.}\\

  Following \cite{Campo}   the vector-valued Helgason Fourier transform of $f\in  C^\infty_c(G,\tau)$ is  the $V_\tau$-valued function on $\mathfrak a^*_{\mathbb C}\times K=\mathbb C\times K$ given by
  \begin{equation*}
 \mathcal F^\tau f(\lambda,k)= \int_G \diff g\,  e^{(i\lambda -\rho)H(g^{-1}k)} \tau(\kappa(g^{-1}k))^{-1}
f(g). 	
  \end{equation*}
 
  Then for fixed $\sigma\in \widehat M(\tau)$, the   partial Helgason Fourier transform  $\mathcal F_\sigma^\tau f$ of $f\in  C^\infty_c(G,\tau)$ is  the $V_\sigma$-valued function on $\mathfrak a^*_{\mathbb C}\times K=\mathbb C\times K$ given by  
  \begin{equation}\label{12-dec-F}
\mathcal F_\sigma^\tau f(\lambda,k)= \sqrt{d_{\tau,\sigma}}\; P_\sigma\mathcal F^\tau f(\lambda,k),
\end{equation}
where %$d_{\tau,\sigma}=\frac{\dim \tau}{\dim \sigma}$ 
$$
d_{\tau,\sigma}=\frac{\dim \tau}{\dim \sigma}=\begin{cases}
 1&\text{ if $n$ is even},\\
  {2}&\text{ if $n$ is odd.}\\
 \end{cases}	
$$
and 
$P_\sigma$ is   the orthogonal  projection of $V_\tau$ onto  its $M$-isotypical component $V_\sigma$ .
We will also consider the function $\mathcal F_{\sigma,\lambda}^\tau f$ as a function on $K$ given by   $\mathcal F_{\sigma,\lambda}^\tau f(k)=\mathcal F_\sigma^\tau f(\lambda,k)$. Then $\mathcal F_{\sigma,\lambda}^\tau f$ belongs to $L^2(K,\sigma)$, the space of square integrable  $V_\sigma$-valued functions which are right $M$-covariant.

As proved in \cite[Theorem 7.3]{CP}, the Plancherel theorem for $L^2(G,\tau)$ is as follows:

$(1)$ If $n$ is even, then the  Helgason Fourier transform on $C_{\mathrm{c}}^{\infty}\left(G, \tau_n^{ \pm}\right)$ is inverted by the following formula
\begin{equation}\label{Inv-Formula-even}
f^\pm(g)= \int_0^\infty   \nu(\lambda) \diff \lambda \int_K  \diff k \, e^{-(i\lambda+\rho)H(g^{-1}k)}\tau_n^\pm (\kappa(g^{-1}k)) \mathcal F^{\tau_n^\pm}_{\sigma_{n-1},\lambda} f^\pm(k), 
\end{equation}
and we have the Plancherel formula 
\begin{equation}\label{Plan-Formula-even}
\|f^\pm \|_{L^2\left(G, \tau_n^{ \pm}\right)}^2=\int_0^{\infty}  \nu(\lambda)\diff \lambda\,   
 \| \mathcal F^{\tau_n^\pm}_{\sigma_{n-1},\lambda}  \left(f^\pm \right)\|_{L^2\left(K, \sigma_{n-1}\right)}^2.
\end{equation}
Furthermore,  $\mathcal F^{\tau_n^\pm}=\mathcal F^{\tau_n^\pm}_{\sigma_{n-1}}$ extends to a bijective isometry  
\begin{equation*}
\mathcal F^{\tau_n^\pm} \colon L^2\left(G, \tau_n^{ \pm}\right) \xrightarrow{\sim}
L^2\left(\mathbb{R}_{+} ; \nu(\lambda) \diff\lambda ; L^2\left(K, \sigma_{n-1}\right)\right).
\end{equation*}

$(2)$ If $n$ is odd, then the  Helgason Fourier transform on $C_{\mathrm{c}}^{\infty}\left(G, \tau_n\right)$ is inverted by the following formula
  \begin{equation}\label{Inv-Formula-odd}
f(g)=\sqrt{2}    \int_0^\infty     \nu(\lambda) \diff \lambda \int_K \diff k \,
 e^{-(i\lambda+\rho)H(g^{-1}k)}\tau_n (\kappa(g^{-1}k))\bigl[ 
  \mathcal F^{\tau_n}_{\sigma_{n-1}^+,\lambda}f(k)  + 
 \mathcal F^{\tau_n}_{\sigma_{n-1}^-,\lambda} f(k)\bigr],
\end{equation}
and we have the Plancherel formula 
\begin{equation}\label{Plan-Formula-odd}
\|f\|_{L^2\left(G, \tau_n\right)}^2=   \int_0^{\infty}  \nu(\lambda) \diff\lambda  
\left[\|
 \mathcal F^{\tau_n}_{\sigma_{n-1}^+,\lambda}(f)
\|_{L^2\left(K, \sigma_{n-1}^{+}\right)}^2 
 +\|
 \mathcal F^{\tau_n}_{\sigma_{n-1}^-,\lambda}(f)
 \|_{L^2\left(K, \sigma_{n-1}^{-}\right)}^2\right].
\end{equation}
Furthermore, 
the Fourier transform 
$\mathcal{F}^{\tau_n}=\bigl( \mathcal F^{\tau_n}_{\sigma_{n-1}^+}  ,  \mathcal F^{\tau_n}_{\sigma_{n-1}^-}\bigr)$ extends to a bijective isometry 
\begin{equation*}
\mathcal F^{\tau_n}  \colon L^2\left(G, \tau_n\right) \xrightarrow{\sim}
L^2\left(\mathbb{R}_{+} ; \nu(\lambda) \diff\lambda  ; L^2\left(K, \sigma_{n-1}^{+}\right)\right) \oplus L^2\left(\mathbb{R}_{+};  \nu(\lambda) \diff\lambda  ; L^2\left(K, \sigma_{n-1}^{-}\right)\right).
\end{equation*}
Above, the Plancherel density $\nu(\lambda)$ is given by
 \begin{equation*}
 	\nu(\lambda)=\begin{cases}
 		\frac{2}{\pi} |\mathbf c(\lambda)|^{-2} & \text{ if $n$ is even},\\
 		\frac{1}{\pi} |\mathbf c(\lambda)|^{-2} & \text{ if $n$ is odd},\\
 	\end{cases}
 \end{equation*}
where
\begin{equation}\label{c-function1}
\mathbf c(\lambda)=2^{n-i 2 \lambda} \frac{\Gamma(n / 2) \Gamma(i 2 \lambda)}{\Gamma(i \lambda+n / 2) \Gamma(i \lambda)}.
\end{equation}
The explicite expression of $\nu(\lambda)$ is given by (see \cite[Theorem 6.3]{CP})
\begin{equation}\label{d1}
\nu(\lambda)=
\begin{cases}
2^{3-2n}[(n/2-1)]^{-2}\lambda\, \coth(\pi\,\lambda)\prod_{j=1}^{\frac{n}{2}-1}(\lambda^2+j^2) &\text{if $n$ is even},	\\
2^{-2}\pi^{-1}[(\frac{n-1}{2})( \frac{n-1}{2}+1)(n-2)]^{-2}\prod_{j=1}^{\frac{n-1}{2}}(\lambda^2+(j-1/2)^2)& \text{if $n$ is odd}.
\end{cases}
\end{equation}
Notice  that the Plancherel density  satisfies the following estimate
 \footnote{The symbol $f \asymp g$ means that there exists positive constants $C_1$ and $C_2$ such that $C_1 g(\lambda) \leqq f(\lambda) \leq C_2 g(\lambda)$.} 
\begin{equation}\label{dasy}
\nu(\lambda)\asymp (1+\mid \lambda\mid)^{n-1} \quad \text{ for }\lambda\in\mathbb{R}.
\end{equation}

\section{The Helgason Fourier restriction theorem}

In this section, we shall investigate  the Helgason Fourier   restriction 
theorem for $L^2(K,\sigma)$.
To this end,  let us first review some facts about  the Radon transform.

    The   Radon transform of $f \in C_{\mathrm{c}}^{\infty}(G , \tau)$ is the $V_{\tau}$-valued function defined by (see e.g. \cite{BOS})
$$
\mathcal R^\tau f (g)=\mathcal R^\tau f(t,k)=e^{\rho t } \int_N f(ka_tn) \diff  n,\quad g=ka_tn\in G.
$$
 Then, for each $\sigma\in \widehat M(\tau)$ we define the   partial Radon transform of $f\in C_{\mathrm{c}}^{\infty}(G ; \tau)$ by
$$\mathcal R^\tau _\sigma f(t,k)=\sqrt{ d_{\tau,\sigma} } P_\sigma \mathcal R^\tau f(t,k).$$

For every real number $R>0$,  set $B(R):=\{gK\in G/K, d(o,gK)<R\}$, where $d(\cdot,\cdot)$ is the hyperbolic distance on $G/K=H^n(\mathbb R)$. Since (see \cite[page 476]{H})  
$$
d(eK,ka_tnK)\geq  |t|,  \quad k\in K, n\in N,
$$
we have %we easily see that, if $f\in C_c^\infty(G,\tau)$ with $\operatorname{supp}f\subset B(R)$, then  
\begin{align}\label{RS}
\operatorname{supp}  \mathcal{R}^\tau_\sigma f\subset [-R,R]\times K,
\end{align}
 for any $f\in C_c^\infty(G,\tau)$ with $\operatorname{supp}f\subset B(R)$.

Recall also the following useful relation, %Notice finally that the    partial Radon transform  is related to   the partial Helgason Fourier transform   by the following formula :
\begin{equation}\label{Radon-Fourier}
\mathcal F^\tau_{\sigma,\lambda} f (k)= \mathcal F^\tau_\sigma f (\lambda,k)=\mathcal F_\mathbb{R}\left[\mathcal R^\tau
_\sigma f(\cdot,k)\right](\lambda),\quad k\in K,
\end{equation}
 where $\mathcal{F}_\mathbb{R} \phi(\lambda)=\int_{\mathbb{R}}{\rm e}^{-i\lambda t}\phi(t){\rm d}t$  is the Euclidean Fourier transform on $\mathfrak a=\mathbb R$.

 %We come now to the statement of the    uniform continuity  estimate  for restriction of the Helgason Fourier transform.% operator

\begin{theorem}[Helgason Fourier restriction theorem]\label{fourier} Let $\tau\in\Lambda$ and $\sigma\in\widehat{M}(\tau)$.

$(1)$ Suppose $n$ even. There exists a positive constant $C$ such that for $\lambda\in\mathbb R\setminus\{0\}$ and $R>1$ we have 
\begin{equation}\label{fourier-rest} 
\|\mathcal F_{\sigma,\lambda}^{\tau} f \|_{L^2(K,\sigma)}\leq C R^{1/2} \nu(\lambda)^{-1/2}     \|f\|_{L^2(G,\tau)},
\end{equation}
for every $f\in L^2(G,\tau)$ supported in $B(R)$.

$(2)$ Suppose $n$ odd. There exists a positive constant $C$ such that for $\lambda\in\mathbb R\setminus\{0\}$ and $R>0$ we have
\begin{equation}\label{fourier-rest1} 
\|\mathcal F_{\sigma,\lambda}^{\tau} f \|_{L^2(K,\sigma)}\leq C R^{1/2} \nu(\lambda)^{-1/2}     \|f\|_{L^2(G,\tau)},
\end{equation}
for every $f\in L^2(G,\tau)$ supported in $B(R)$.
\end{theorem}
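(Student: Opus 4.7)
The plan is to combine the Radon--Fourier identity \eqref{Radon-Fourier} with the Plancherel formula on $G$. Writing $g_{k}(t):=\mathcal R^{\tau}_{\sigma} f(t,k)$, the support condition \eqref{RS} forces $g_{k}$ to vanish outside $[-R,R]$, while \eqref{Radon-Fourier} identifies $F(\mu,k):=\mathcal F^{\tau}_{\sigma,\mu} f(k)$ with the Euclidean Fourier transform $\mathcal F_{\mathbb R}[g_{k}](\mu)$. In particular, $F(\cdot,k)$ is entire in $\mu$ of exponential type $R$ (Paley--Wiener), and this is the extra piece of information that should allow the Plancherel density $\nu(\lambda)$ to enter the estimate.

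First I would apply Cauchy--Schwarz to the Fourier integral:
\[
|F(\lambda,k)|^{2}=\Bigl|\int_{-R}^{R} e^{-i\lambda t} g_{k}(t)\,dt\Bigr|^{2}\leq 2R\,\|g_{k}\|_{L^{2}(\mathbb R)}^{2},
\]
and then use the Euclidean Plancherel identity in $t$ to rewrite the right-hand side as $(R/\pi)\|F(\cdot,k)\|_{L^{2}(\mathbb R)}^{2}$. Integrating over $k\in K$ gives
\[
\|F(\lambda,\cdot)\|_{L^{2}(K,\sigma)}^{2} \leq \frac{R}{\pi}\int_{\mathbb R}\|F(\mu,\cdot)\|_{L^{2}(K,\sigma)}^{2}\,d\mu.
\]
Combined with \eqref{Plan-Formula-even}/\eqref{Plan-Formula-odd} and the fact (visible from \eqref{d1}) that $\nu$ is bounded below on $(0,\infty)$, this already establishes the coarse estimate $\|F(\lambda,\cdot)\|^{2}\lesssim R\|f\|^{2}$, which is the sought bound whenever $\lambda$ is in a bounded range.

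To extract the sharper factor $\nu(\lambda)^{-1}$ needed for large $|\lambda|$, the next step is to localize the $\mu$-integration to a short interval $I_{\lambda}$ of length $\sim 1/R$ centered at $\lambda$. Since $F(\cdot,k)$ is Paley--Wiener of exponential type $R$, a Bernstein-type sub-mean estimate for such functions gives
\[
\|F(\lambda,\cdot)\|_{L^{2}(K,\sigma)}^{2}\lesssim R\int_{I_{\lambda}}\|F(\mu,\cdot)\|_{L^{2}(K,\sigma)}^{2}\,d\mu.
\]
Under the hypotheses $R>1$ and $\lambda\neq 0$ (case $n$ even), one has $\nu(\mu)\asymp\nu(\lambda)$ on $I_{\lambda}$ by \eqref{dasy}, so the Plancherel formula yields $\int_{I_{\lambda}}\|F(\mu,\cdot)\|^{2}d\mu\leq C\nu(\lambda)^{-1}\|f\|^{2}$, and the claimed bound follows after taking square roots.

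The main obstacle is the local Bernstein-type inequality in the penultimate step: the classical Bernstein inequality $\|F'\|_{L^{2}(\mathbb R)}\leq R\|F\|_{L^{2}(\mathbb R)}$ is global and does not localize directly. The standard workaround is to use a reproducing-kernel (or sampling) representation of PW functions of type $R$, together with decay estimates for the kernel, in order to trade a dominant local integral against a negligible tail. The case distinction between $n$ even and $n$ odd and the hypothesis $R>1$ in the former ultimately reflect the slightly different shape of the Plancherel density $\nu$ near the origin apparent in the two lines of \eqref{d1}.
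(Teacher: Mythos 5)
Your opening move (pass to the Radon transform, use the support condition \eqref{RS}, apply Cauchy--Schwarz over $[-R,R]$ and then the Euclidean Plancherel identity) is exactly the paper's basic mechanism. The genuine gap is in the decisive step, namely the extraction of the factor $\nu(\lambda)^{-1}$. You propose to localize the frequency integral to an interval $I_\lambda$ of length $\sim 1/R$ via a ``Bernstein-type sub-mean estimate'' for Paley--Wiener functions, and you yourself flag this as the main obstacle. As stated, the purely local inequality $\|F(\lambda,\cdot)\|^2\lesssim R\int_{I_\lambda}\|F(\mu,\cdot)\|^2\,d\mu$ is not a fact you can simply invoke: the reproducing-kernel workaround you sketch only yields the weighted version $\|F(\lambda,\cdot)\|^2\lesssim\int_{\mathbb R}\|F(\mu,\cdot)\|^2\,R(1+R|\lambda-\mu|)^{-N}d\mu$, and to conclude you must then dominate the ratio $\nu(\lambda)/\nu(\mu)\lesssim(1+|\lambda-\mu|)^{n-1}$ by the kernel decay. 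That comparison works only for $R\gtrsim 1$; for $R<1$ one has $(1+|\lambda-\mu|)^{n-1}\leq R^{-(n-1)}(1+R|\lambda-\mu|)^{n-1}$ and the estimate degenerates, so even a completed version of your argument cannot deliver the odd case for all $R>0$ with the constant $CR^{1/2}$. Your explanation of the hypothesis $R>1$ in the even case (``the shape of $\nu$ near the origin'') is also not the actual reason.

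The paper avoids frequency localization altogether: it moves the weight $\nu(\lambda)^{1/2}$ onto the $t$-side as an operator that (essentially) preserves compact support, and then runs the same global Cauchy--Schwarz plus Plancherel argument on the weighted function. For $n$ odd, $\nu(\lambda)^{1/2}=|p(\lambda)|$ for a polynomial $p$ of degree $(n-1)/2$, so $\nu(\lambda)^{1/2}\,\mathcal F_{\mathbb R}[g_k](\lambda)$ equals $\mathcal F_{\mathbb R}\bigl[p(d/dt)g_k\bigr](\lambda)$ in modulus, and $p(d/dt)g_k$ is still supported in $[-R,R]$; Cauchy--Schwarz then gives exactly the factor $2R$ for every $R>0$. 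For $n$ even, $\nu^{1/2}$ is not polynomial, and the paper instead uses Anker's lemma to replace $(1+|\lambda|)^{(n-1)/2}$ by the Fourier transform of a compactly supported distribution $T$, so that the support only grows from $[-R,R]$ to $[-(R+R_0),R+R_0]$ --- this is the true source of the restriction $R>1$. I recommend adopting this device; it replaces your unproved localization lemma by an elementary support argument. A smaller point (shared with the paper's write-up): after Plancherel you still need to control the integral over $\mu<0$, which requires the Weyl-group symmetry \eqref{equal} of the Fourier transform rather than only the lower bound on $\nu$ over $(0,\infty)$.
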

 
\begin{proof}
$(1)$ Suppose $n$ even. It is sufficient to prove \eqref{fourier-rest} for $f\in C_c^\infty(G,\tau)$. Using the Plancherel formula \eqref{Plan-Formula-even}, 
the proof is reduced to establishing that there exists   $C>0$ such that for any $R>1$  
\begin{align*}
\nu(\lambda)\parallel \mathcal F_{\sigma_{n-1},\lambda}^{\tau_n^\pm} f\parallel^2_{L^2(K,\sigma_{n-1})}\leq C R\int_0^\infty \nu(\lambda) \diff \lambda\, \parallel \mathcal F_{\sigma_{n-1},\lambda}^{\tau_n^\pm} f\parallel^2_{L^2(K,\sigma_{n-1})}, 
\end{align*}
for any  $f\in C_c^\infty(G,\tau)$ with $\operatorname{supp}\,f\subset B(R)$.\\
From \eqref{Radon-Fourier}  the above inequality is equivalent to 
\begin{equation}\label{R1}
 \parallel  \nu(\lambda)^{1/2}\mathcal F_\mathbb{R}[\mathcal R^{\tau_n^\pm}_{\sigma_{n-1}} f(t,\cdot)](\lambda)\parallel^2_{L^2(K,\sigma_{n-1})}\\
 \atop \hspace{5cm}\leq C R\int_0^\infty \nu(\lambda) \diff \lambda\, \parallel \mathcal F_\mathbb{R}[\mathcal R^{\tau_n^\pm}_{\sigma_{n-1}} f(t,\cdot)](\lambda)\parallel^2_{L^2(K,\sigma_{n-1})}. 
\end{equation} 
Using \eqref{dasy}  and according to  \cite[Lemma 4]{Anker} we may replace $\nu(\lambda)^{1/2}$ by 
$(1+ |\lambda|)^{\frac{n-1}{2}}$ and $(1+|\lambda|)^{\frac{n-1}{2}}$ by a Schwartz function $\eta_{\rho}(\lambda)$ on $\mathbb R$,  which is the Fourier transform of a tempered distribution $T$ with compact support.  Then  the  inequality  \eqref{R1} is equivalent to 
\begin{equation}\label{F2}
CR^{-1}\parallel \mathcal{F}_\mathbb{R}[T\ast \mathcal{R}^{\tau_n^\pm}_{\sigma_{n-1}} f(t,\cdot)](\lambda)\parallel^2_{L^2(K,\sigma_{n-1}^\pm)}
 \atop \hspace{6cm} \leq \int_\mathbb{R} \diff \lambda\, \parallel \mathcal{F}_\mathbb{R}[T\ast \mathcal{R}^{\tau_n^\pm}_{\sigma_{n-1}} f(t,\cdot)](\lambda)\parallel^2_{L^2(K,\sigma_{n-1}^\pm)}.
\end{equation}
To prove \eqref{F2},  suppose  $\operatorname{supp}  T\subset [-R_0,R_0]$ for some $R_0>1$. Then it  follows  from \eqref{RS} that
$$
\operatorname{supp}  T\ast \mathcal{R}^{\tau_n^\pm}_{\sigma_{n-1}} f(\cdot ,k)\subset [-(R_0+R),(R_0+R)],
$$ for all $k\in K$. Therefore by Cauchy-Schwarz inequality and the Euclidean Plancherel theorem, we have 
\begin{align*}\begin{split}
\parallel \mathcal{F}_\mathbb{R}[T\ast \mathcal{R}^{\tau_n^\pm}_{\sigma_{n-1}} f(t,\cdot)](\lambda)\parallel^2_{L^2(K,\sigma_{n-1}^\pm)}
\leq (R+R_0)\int_K   \diff k \int_\mathbb{R} \diff t\parallel T\ast \mathcal{R}^{\tau_n^\pm}_{\sigma_{n-1}} f(\cdot,k)(t)\parallel^2  \\
= (1+\frac{R_0}{R})R\int_K \diff k \int_\mathbb{R}\diff \lambda \parallel \mathcal{F}_\mathbb{R}[ T\ast \mathcal{R}^{\tau_n^\pm}_{\sigma_{n-1}} f(\cdot,k)](\lambda)\parallel^2 ,
\end{split}\end{align*}
%where we have used  the Euclidean Plancherel theorem, 
thus   the result follows.

$(2)$  Suppose $n$  odd. 
By the Plancherel formula \autoref{Plan-Formula-odd} we have
 \begin{equation*} 
\|f\|_{L^2\left(G, \tau_n\right)}^2=  \int_0^{\infty}   \nu(\lambda) \diff \lambda\,  
\left[\|\mathcal{F}^{\tau_n}_{\sigma_{n-1}^+,\lambda}f \|_{L^2\left(K, \sigma_{n-1}^{+}\right)}^2 
+\|\mathcal{F}^{\tau_n}_{\sigma_{n-1}^-,\lambda}f \|_{L^2\left(K, \sigma_{n-1}^{-}\right)}^2 
 \right] .
\end{equation*} 
It is therefore sufficient to show that there exists a positive constant $C$ such that for any $R>0$ and $f\in L^2(G,\tau_n)$ with  $\operatorname{supp}\,f\subset B(R)$, we have
\begin{equation}\label{sam27}
\parallel  \nu(\lambda)^{1/2}\mathcal{F}^{\tau_n}_{\sigma_{n-1}^\pm,\lambda} f\parallel^2_{L^2(K,\sigma_{n-1}^\pm)}\leq 
 CR\int_0^{\infty}   \nu(\lambda) \diff \lambda\,  
 \| \mathcal{F}^{\tau_n}_{\sigma_{n-1}^\pm,\lambda} f\|_{L^2(K,\sigma_{n-1}^\pm)}^2.
 \end{equation}
 To show \eqref{sam27}, let $f\in C^\infty_c(G,\tau)$ with $\operatorname{supp}\,f\subset B(R)$.  Since $\nu(\lambda)^{1/2}$ is a polynomial of degree $\frac{n-1}{2}$ it follows that
\begin{align*}\begin{split}
\parallel \nu(\lambda)^{1/2} \mathcal{F}^\tau_{\sigma,\lambda} f(k)\parallel &= \parallel \nu(\lambda)^{1/2} \mathcal{F}_\mathbb{R}\mathcal{R}^\tau_{\sigma} f(\cdot,k)(\lambda)\parallel\\
&=\parallel  \mathcal{F}_\mathbb{R}[\nu\left(\frac{d}{dt}\right)^{1/2}( \mathcal{R}^\tau_{\sigma} f(\cdot,k)](\lambda)\parallel\\
&\leq \int_\mathbb{R}  \diff t\parallel [\nu\left(\frac{d}{dt}\right)^{1/2}( \mathcal{R}^\tau_\sigma f(t,k)]\parallel .
\end{split}\end{align*}
 As $\nu\left(\frac{d}{dt}\right)^{1/2}$ is a differential operator we have  $\operatorname{supp}\,[\nu\left(\frac{d}{dt}\right)^{1/2}  \mathcal{R}^\tau_\sigma f(\cdot,k)]\subset   [-R,R]$. Then the Cauchy-Schwarz inequality and the Euclidean Plancherel formula imply
 $$\begin{aligned}
	\parallel \nu\left(\lambda\right)^{1/2} \mathcal{F}^\tau_\lambda f(k)\parallel^2 
	&\leq 2 R\int_\mathbb{R} \diff \lambda\,
	\Bigl\| 
	\mathcal{F}_\mathbb{R}\Bigl[\nu\left(\frac{d}{dt}\right)^{1/2}\mathcal{R}^\tau_\sigma  f(\cdot,k)\Bigr]
	\Bigr\|^2\\
&=2R\int_{\mathbb R}\nu(\lambda) \diff \lambda\, \|\mathcal F_{\sigma,\lambda}^\tau f(k)\|^2.
\end{aligned}$$  
Thus, integrating both part of the above inequality with respect to $k$ the result follows. This completes the proof of Theorem \ref{fourier}. 
\end{proof}

\section{Poisson transform on spinors}
  Let  $\sigma\in \widehat M$ be a spinor  or half-spinor representation of $M=\Spin(n-1)$. Recall that the space of spinors on $K/M$ can be realized as the homogeneous vector bundle $K\times_M V_{\sigma}$.  
The space of sections if this bundle is  identified with $\Gamma(K,\sigma)$,  the space of $V_\sigma$-valued functions  $F$ on $K$ which are $M$-covariant with respect to $\sigma$, meaning  $F(km)=\sigma(m)^{-1}F(k)$ for all $m\in M$ and $k\in K$. In what follows, we denote by  $C^\infty(K,\sigma)$, $C^{-\omega}(K,\sigma )$ and $L^2(K,\sigma)$  the spaces of smooth sections, hyperfunctions sections and  $L^2$-sections (with respect to the normalized Haar measure of $K$), respectively.

For any $\lambda\in \mathfrak a^*_{\mathbb C}\simeq \mathbb C$, let    $\sigma_{\lambda}$  denote the representation of the minimal parabolic subgroup $ P=MAN$ of $G$   given by
$$\sigma_{\lambda}(ma_tn)=e^{(\rho-i\lambda) t}\sigma(m).$$
 The group $G$ acts on the space
$$L^2(G;\sigma_\lambda):=
\{f\colon G\to V_\sigma , f(gma_tn)=e^{(i\lambda-\rho)t}\sigma(m)^{-1} f(g) ,\; f_{\mid K}\in L^2(K) \},$$
by left translations, $\pi_{\sigma,\lambda} (g) f(h) =f(g^{-1} h)$. These are   the   principal series representations of $G$.
It is known that as a $K$-module, $L^2(G,\sigma_\lambda)$ is isomorphic (for any $\lambda$ ) to the space   $L^2(K,\sigma)$  
on which $\pi_{\sigma,\lambda}$ acts by
\begin{equation}\label{princ-series}
\pi_{\sigma, \lambda}(g) f(k)=e^{(i \lambda-\rho) H\left(g^{-1} k\right)} f\left(\kappa\left(g^{-1} k\right)\right). 
\end{equation}
  
%Recall that the space $L^2(K,\sigma)$   is   identified with $L^2$-sections of the bundle $K\times_M V_\sigma$ over $\partial H^n(\mathbb R)=K/M$.

 Let $\tau\in\Lambda$ and  $\sigma\in\widehat M(\tau)$.  
For  $\lambda\in\mathfrak a^*_{\mathbb C}\simeq \mathbb C$, the Poisson transform  on $L^2(K,\sigma)$ is the map 
\begin{align*}
\mathcal{P}_{\sigma,\lambda}^\tau \colon L^2(K,\sigma)\longrightarrow  C^\infty(G,\tau)
\end{align*}
given by\footnote{The parameter $\lambda$ in \cite{CP} corresponds to  $-\lambda$ in our paper.}
\begin{equation}\label{Poisson}
\mathcal{P}_{\sigma,\lambda}^\tau\, F(g)= \sqrt{d_{\tau,\sigma} }
 \int_K \diff k\,{\rm e}^{-(i\lambda+\rho)H(g^{-1}k)}\tau(\kappa(g^{-1}k))   F(k).
\end{equation}

 Since   for $\lambda\in \mathbb{R}$, the Poisson transform  $\mathcal P_{\sigma,\lambda}^\tau$ can be seen as the adjoint \hb{of} $\mathcal F_{\sigma,\lambda}^\tau$,      the following uniform estimate for Poisson transform is an immediate consequence of Theorem \eqref{fourier}.
 
\begin{proposition}\label{pro-est-Poisson}
 Let $\tau\in\Lambda$ and $\sigma\in\widehat M(\tau)$.  

(1) Suppose $n$ even. Then there exists a positive constant $C$ such that for $\lambda\in\mathbb R\setminus\{0\}$ we have
\begin{equation}\label{esti-Poisson-enen}
\sup_{R>1} \frac{1}{R}\int_{B(R)} \diff (gK)\,\|\mathcal P_{\sigma,\lambda}^\tau F(g)\|_{\tau}^2   \leq C  \nu(\lambda)^{-1} \|F\|^2_{L^2(K,\sigma)},
\end{equation}
for any $F\in L^2(K,\sigma)$.

(2) Suppose $n$ odd. Then there exists a positive constant $C$ such that for $\lambda\in\mathbb R\setminus\{0\}$ we have
\begin{equation}\label{esti-Poisson-odd}
\sup_{R>0}  \frac{1}{R}\int_{B(R)}\diff (gK)\, \|\mathcal P_{\sigma,\lambda}^\tau F(g)\|_{\tau}^2   \leq C  \nu(\lambda)^{-1} \|F\|^2_{L^2(K,\sigma)},
\end{equation}
for any $F\in L^2(K,\sigma)$.
\end{proposition}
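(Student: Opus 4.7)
The plan is to derive the estimate by duality from the Helgason Fourier restriction theorem (Theorem~\ref{fourier}), exactly as signalled in the comment preceding the statement. The whole argument is a soft functional-analytic consequence of Theorem~\ref{fourier} once the adjoint relation between $\mathcal{F}^\tau_{\sigma,\lambda}$ and $\mathcal{P}^\tau_{\sigma,\lambda}$ is verified.

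First I would record the adjoint identity
\begin{equation*}
\langle \mathcal{F}^\tau_{\sigma,\lambda} f, F\rangle_{L^2(K,\sigma)}
=\langle f, \mathcal{P}^\tau_{\sigma,\lambda} F\rangle_{L^2(G,\tau)}
\qquad (\lambda\in\mathbb{R},\ f\in L^2(G,\tau),\ F\in L^2(K,\sigma)).
\end{equation*}
This is an elementary Fubini computation using: (i) the kernel $e^{(i\lambda-\rho)H(g^{-1}k)}$ becomes $e^{-(i\lambda+\rho)H(g^{-1}k)}$ under complex conjugation when $\lambda\in\mathbb{R}$; (ii) $\tau(\kappa(g^{-1}k))$ is unitary so $\tau(\kappa(g^{-1}k))^{-1}=\tau(\kappa(g^{-1}k))^{*}$; and (iii) the orthogonal projection $P_\sigma$ can be absorbed into $F$ since $F$ is already $V_\sigma$-valued, so that the factor $\sqrt{d_{\tau,\sigma}}$ matches the one in the definition of $\mathcal{P}^\tau_{\sigma,\lambda}$. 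Thus $\mathcal{P}^\tau_{\sigma,\lambda}$ is the formal adjoint of $\mathcal{F}^\tau_{\sigma,\lambda}$ for real $\lambda$.

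Next I would apply duality. Fix $\lambda\in\mathbb{R}\setminus\{0\}$, and let $\chi_R$ be the characteristic function of the ball $B(R)$. Theorem~\ref{fourier} says the operator $f\mapsto \mathcal{F}^\tau_{\sigma,\lambda}(\chi_R f)$ from $L^2(G,\tau)$ to $L^2(K,\sigma)$ has operator norm bounded by $C R^{1/2}\nu(\lambda)^{-1/2}$, with $R>1$ (even case) or $R>0$ (odd case). By the adjoint relation, its adjoint is $F\mapsto \chi_R\, \mathcal{P}^\tau_{\sigma,\lambda}F$, which then has the same operator norm. Squaring gives
\begin{equation*}
\int_{B(R)} \|\mathcal{P}^\tau_{\sigma,\lambda} F(g)\|_\tau^{2}\,\diff(gK)
\le C^{2} R\, \nu(\lambda)^{-1}\|F\|_{L^2(K,\sigma)}^{2}.
\end{equation*}
Dividing by $R$ and taking the supremum over the admissible range of $R$ yields \eqref{esti-Poisson-enen} in the even case and \eqref{esti-Poisson-odd} in the odd case.

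There is essentially no obstacle here; the proof is structural. The only points that require care are the verification of the adjoint identity (where the parity-dependent constant $d_{\tau,\sigma}$ has to be carried through correctly together with the projection $P_\sigma$) and the book-keeping of which range of $R$ is allowed, which is dictated by the corresponding restriction theorem (Theorem~\ref{fourier}(1) versus Theorem~\ref{fourier}(2)) and therefore determines whether the supremum is taken over $R>1$ or over $R>0$.
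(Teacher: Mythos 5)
Your proposal is correct and follows exactly the route the paper takes: the paper derives Proposition \ref{pro-est-Poisson} as an immediate consequence of Theorem \ref{fourier} by observing that, for real $\lambda$, $\mathcal P^\tau_{\sigma,\lambda}$ is the formal adjoint of $\mathcal F^\tau_{\sigma,\lambda}$, so that localizing to $B(R)$ and dualizing transfers the restriction bound to the Poisson transform. Your write-up merely spells out the adjoint identity and the $\chi_R$ duality step that the paper leaves implicit.
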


 Let $\mathbb D(G,\tau)$ be   the left algebra  of $G$-invariant differential operators acting on $C^\infty(G,\tau)$. Following  \cite{Gaillard}, $\mathbb D(G,\tau)$ is a commutative algebra, furthermore  $\mathbb D(G,\tau^{\pm}_n)\simeq \mathbb{C}[\fsl D^2]$ if $n$ is even and   $\mathbb D(G,\tau_n)\simeq \mathbb{C}[\fsl D]$  if $n$ is odd.  Here $\fsl D$ is the Dirac operator, which can be expressed in local coordinates as
  $$\fsl D=\sum_{i=1}^n \tau(e_i) \partial_{e_i},$$
 where $(e_i)_{i=1,\ldots n}$ is a local orthonormal  basis of $T_oH^n(\mathbb R)\simeq \mathbb R^n$.  
 
It is known (see \cite[Theorem 5.1]{CP}) that for $\lambda\in \mathbb C,$ we have:
\begin{equation*}%\label{eigen1}
\fsl D^2\,\mathcal{P}_{\sigma_{n-1},\lambda}^{\tau_n^\pm} F=\lambda^2 \mathcal{P}_{\sigma_{n-1},\lambda}^{\tau_n^\pm} F, \; \text{ for any }\, F\in  C^\infty(K,\sigma_{n-1})\quad \text{if $n$ is even},
\end{equation*}
 \begin{equation*}%\label{eign2}
\fsl D\, \mathcal{P}_{\sigma_{n-1}^\pm,\lambda}^{\tau_n} F=\mp \lambda\, \mathcal{P}_{\sigma_{n-1}^\pm,\lambda}^{\tau_n} F,\; \text{ for any } F\in  C^\infty(K,\sigma^\pm_{n-1})\quad \text{if $n$ is odd}.
\end{equation*}
 Hence, 
 $$\begin{aligned}
\mathcal P_{\sigma_{n-1},\lambda}^{\tau_n^\pm} &\colon L^2(K,\sigma_{n-1}) \to \mathcal{E}_{\sigma_{n-1},\lambda}(G, \tau_n^\pm) \quad \text{if $n$ is even},\\
\mathcal P_{\sigma_{n-1}^\pm,\lambda}^{\tau_n} &\colon L^2(K,\sigma_{n-1}^\pm) \to \mathcal{E}_{\sigma_{n-1}^\pm,\lambda}(G, \tau_n)  \quad \text{if $n$ is odd},
\end{aligned}
$$
where  the target  spaces are the following   eigenspaces 
 \begin{align} 
\mathcal{E}_{\sigma_{n-1},\lambda}(G, \tau_n^\pm) &=\left\{ F\in C^\infty(G,\tau_n^\pm) \mid \fsl D^2 F =  \lambda^2 F\right\}& \,\text{if $n$ is even},\label{g1}\\
 \mathcal{E}_{\sigma_{n-1}^\pm, \lambda}(G, \tau_n) &=\left\{ F\in C^\infty(G,\tau_n) \mid \fsl D F =  \mp\lambda F\right\}& \,\text{if $n$ is odd}.\label{g2}
\end{align} 
  %To get a more accurate on the image of $\mathcal P_{\sigma,\lambda}^\tau$,
  Now,  Proposition \ref{pro-est-Poisson}  is  intended to motivate introducing  the space 
  $\mathcal E_{\sigma,\lambda}^2(G,\tau)$  given by %one of the following spaces according  whether $n$ is even or odd :
$$\begin{aligned}
\mathcal E_{\sigma_{n-1},\lambda}^2(G,\tau_n^\pm )&=\mathcal E_{\sigma_{n-1},\lambda}(G,\tau_n^\pm)\cap B^*(G,\tau_n^\pm) \quad \text{if $n$ is even},\\
 \mathcal E_{\sigma_{n-1}^\pm,\lambda}^2(G,\tau_n )&=\mathcal E_{\sigma_{n-1}^\pm,\lambda}(G,\tau_n )\cap B^*(G,\tau_n ) \quad \text{if $n$ is odd},
 \end{aligned}$$
  where
   $B^*(G,\tau)$  is the space of   $f\in L^2_{\text{loc}}(G,\tau)$ such that
 \begin{equation*}%\label{norm*}
 \|f \|^2_*=\begin{cases}
 \displaystyle\sup_{R>1} \frac{1}{R}\int_{B(R)}\diff (gK)\,\|f(g)\|^2_\tau  <\infty &  \text{if $n$ is even},\\
   \displaystyle\sup_{R>0} \frac{1}{R}\int_{B(R)}\diff (gK)\,\|f(g)\|^2_\tau  <\infty &  \text{if $n$ is odd}.
 \end{cases}
\end{equation*}
 It follows immediately  form  Proposition \ref{pro-est-Poisson} and the irreducibility of the principal series representations $\pi_{\sigma,\lambda}$ for $\lambda\in \mathbb{R}\setminus \{0\}$  that,  the Poisson transforms  
\begin{equation}\label{p1}
	 \mathcal P_{\lambda,\sigma_{n-1}}^{\tau_n^\pm} \colon L^2(K,\sigma_{n-1}) \to \mathcal E_\lambda^2(G,\tau_n^\pm ) \quad \text{if $n$ is even},
\end{equation}
\begin{equation}\label{p2}
 \mathcal P_{\lambda,\sigma_{n-1}^\pm}^{\tau_n}  \colon L^2(K,\sigma_{n-1}^\pm) \to \mathcal E_{\pm,\lambda}^2(G,\tau_n ) \quad \text{if $n$ is odd}.
\end{equation}
are continuous injective linear maps for every $\lambda\in \mathbb{R}\setminus \{0\}$. We will 
   show below, that   \eqref{p1} and \eqref{p2} are indeed  bijective maps. 
 
\section{Announcement of the main results}
 
Let $\tau\in\Lambda$ and $\sigma\in\widehat M(\tau)$.  For $f \in L^2(G,\tau)$ we define the function 
 $\mathcal{Q}^\tau_{\sigma,\lambda} f \in C^{\infty}(G,\tau)$ for a.e. $\lambda \in \mathbb R$ by
  \begin{equation}\label{proj-Q1}
 \mathcal{Q}^\tau_{\sigma,\lambda} f(g)=\sqrt{ d_{\tau,\sigma}  }\, {\nu(\lambda)} \int_K \diff k \,
  e^{-(i\lambda+\rho)H(g^{-1}k)} \tau(\kappa(g^{-1}k))
 \mathcal{F}_{\sigma,\lambda}^\tau f(k).
 \end{equation}
Then, according to the Helgason Fourier inversion formula (\eqref{Inv-Formula-even}-\eqref{Inv-Formula-odd})  we have the following spectral decomposition  
 $$f=   \sum_{\sigma\in\widehat{M}(\tau}\int_0^\infty    \diff \lambda\, \mathcal Q_{\sigma, \lambda}^\tau f,  $$
The family of operators
$$\mathcal{Q}^\tau_{\lambda}  =\left(\mathcal{Q}^\tau_{\sigma,\lambda} \right)_{\sigma\in\widehat{M}(\tau)} : L^2(G,\tau)\to \oplus_{\sigma\in\widehat{M}(\tau)}\mathcal E_{\sigma,\lambda}(G,\tau),\quad \lambda\in\mathbb R_+$$
  %The family    $\left(\mathcal{Q}^\tau_{\lambda}\right)_{\lambda \in \mathbb R_{+}}$ 
  are called    generalized spectral projections.   
  We also set
$$\mathcal Q^\tau f(\lambda,g)=\mathcal Q^\tau_\lambda f(g) \; \text{for a.e. } (\lambda,g)\in\mathbb R_+\times G.$$

Our objective is therefore to investigate an image characterization of $Q^\tau$. To accomplish this, we begin by defining the space
%To this end, we begin by introducing   the space
$$\mathcal E^2_{\mathbb R_+}(G,\tau):=\oplus_{\sigma\in\widehat M(\tau)} \mathcal E^2_{\sigma,\mathbb R_+}(G,\tau)$$ where $\mathcal E^2_{\sigma,\mathbb R_+}(G,\tau)$ is   the space of $V_\tau$-valued measurable functions   $\psi$ on $\mathbb R_+\times G$ such that $\psi(\lambda,\cdot)\in \mathcal E_{\sigma,\lambda}(G,\tau)$   for a.e. $\lambda\in(0,\infty)$ and $\| \psi\|_+<\infty$, with
\begin{equation}\label{norm+}
    \| \psi\|_{+}^2 :=
\begin{cases}
\displaystyle\sup_{R>1} \int_0^\infty \diff\lambda\,\frac{1}{R}\int_{B(R)}\diff (gK)\, \|\psi(\lambda,g)\|_\tau^2  <\infty & n \; \text{even},\\
\displaystyle\sup_{R>0} \int_0^\infty \diff\lambda\,\frac{1}{R}\int_{B(R)} \diff (gK)\,\|\psi(\lambda,g)\|_\tau^2 
   <\infty & n \; \text{odd}.
\end{cases}
\end{equation}
We also set for   $\phi= \left( \phi_\sigma\right)_{\sigma\in\widehat{M}(\tau)}$ 
$$\|\phi \|_+^2 = \sum_{\sigma\in\widehat M(\tau)} \|\phi_\sigma\|_+^2.$$

 Now we are ready to  state our   main theorems.

\begin{theorem}[Strichartz's conjecture for the spectral projections]\label{main-th-proj}% 
Let $\tau\in \Lambda$ and $\sigma\in \widehat{M}(\tau)$.

$(1)$  
 There exists a positive constant $C$   such that  for any $f\in L^2(G,\tau)$, we have
\begin{equation}\label{esti-main1}
C^{-1} \|f\|_{L^2(G,\tau )} \leqslant\left\|\mathcal{Q}^\tau f\right\|_{+} \leqslant C \|f\|_{L^2(G,\tau)} .
\end{equation}
  Furthermore, we have
\begin{equation}\label{asym-Q}
\lim _{R \rightarrow \infty} \int_0^\infty \diff\lambda\,\frac{1}{R} \int_{B(R)}\diff (gK)\, \left\|\mathcal{Q}_{ \lambda}^\tau f(g)\right\|_{\tau}^2   = \gamma_0 \|f\|_{L^2(G,\tau)}^2 .
\end{equation}
where $\gamma_0=2/\pi$ for $n$ odd and $\gamma_0=4/\pi$ for $n$ even.

$(2)$  The linear map $\mathcal{Q}^\tau$ is a topological isomorphism from $L^2(G,\tau)$ onto $\mathcal{E}_{\mathbb R_+}^2(G,\tau)$.

   \end{theorem}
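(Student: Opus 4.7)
The strategy is to factor the spectral projection as $\mathcal{Q}^\tau_{\sigma,\lambda} f = \nu(\lambda)\,\mathcal{P}^\tau_{\sigma,\lambda}(\mathcal{F}^\tau_{\sigma,\lambda}f)$, which is visible by comparing \eqref{proj-Q1} with \eqref{Poisson}, and then to push the Plancherel formulas \eqref{Plan-Formula-even}--\eqref{Plan-Formula-odd} through the Poisson-side results of \autoref{pro-est-Poisson} and Theorem \ref{main-th-Poisson}. Concretely, $\mathcal{Q}^\tau$ presents as $T\circ \mathcal{F}^\tau$, where $T$ sends a tuple $(F_\sigma)_\sigma$ to $(\nu(\lambda)\mathcal{P}^\tau_{\sigma,\lambda} F_\sigma(\lambda,\cdot))_\sigma$; since $\mathcal{F}^\tau$ is a bijective isometry onto $\oplus_\sigma L^2(\mathbb{R}_+;\nu(\lambda)\diff\lambda;L^2(K,\sigma))$, the whole theorem reduces to analysing $T$.

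For the upper bound in \eqref{esti-main1}, I would apply \autoref{pro-est-Poisson} with $F=\mathcal{F}^\tau_{\sigma,\lambda}f$ to obtain, uniformly in $R$,
\[
\frac{1}{R}\int_{B(R)}\|\mathcal{Q}^\tau_{\sigma,\lambda}f(g)\|_\tau^2\,\diff(gK)\leq C\,\nu(\lambda)\,\|\mathcal{F}^\tau_{\sigma,\lambda}f\|_{L^2(K,\sigma)}^2.
\]
Integrating over $\lambda\in(0,\infty)$, summing over $\sigma\in\widehat{M}(\tau)$, and applying Plancherel yields $\|\mathcal{Q}^\tau f\|_+\leq C\|f\|_{L^2(G,\tau)}$.

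For \eqref{asym-Q}, which also contains the lower bound of \eqref{esti-main1}, I expect Theorem \ref{main-th-Poisson} together with the scattering formula of Theorem \ref{asymp-Poisson} to supply the pointwise-in-$\lambda$ limit
\[
\lim_{R\to\infty}\frac{1}{R}\int_{B(R)}\|\mathcal{P}^\tau_{\sigma,\lambda}F(g)\|_\tau^2\,\diff(gK) = \gamma_0\,\nu(\lambda)^{-1}\|F\|_{L^2(K,\sigma)}^2,
\]
valid for every $\lambda\in\mathbb{R}\setminus\{0\}$. Specialising to $F=\mathcal{F}^\tau_{\sigma,\lambda}f$ and multiplying by $\nu(\lambda)^2$ produces the pointwise limit in $\lambda$, which is dominated by the $\lambda$-integrable majorant $C\nu(\lambda)\|\mathcal{F}^\tau_{\sigma,\lambda}f\|_{L^2(K,\sigma)}^2$ from the previous paragraph. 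Dominated convergence then allows me to interchange the limit with the $\lambda$-integral, and summation over $\sigma$ together with Plancherel delivers \eqref{asym-Q}. Matching the scattering constant against the explicit density \eqref{d1} checks the values $\gamma_0 = 2/\pi$ or $4/\pi$ according to parity.

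Part $(2)$ is reduced by the norm equivalence of part $(1)$ to surjectivity of $\mathcal{Q}^\tau$ onto $\mathcal{E}^2_{\mathbb{R}_+}(G,\tau)$. Given $\psi=(\psi_\sigma)_\sigma\in\mathcal{E}^2_{\mathbb{R}_+}(G,\tau)$, the Poisson surjectivity part of Theorem \ref{main-th-Poisson} (anticipated in \eqref{p1}--\eqref{p2}) produces for a.e.\ $\lambda$ a unique $F_\sigma(\lambda,\cdot)\in L^2(K,\sigma)$ with $\psi_\sigma(\lambda,\cdot)=\nu(\lambda)\mathcal{P}^\tau_{\sigma,\lambda}F_\sigma(\lambda,\cdot)$; the same scattering limit yields $\nu(\lambda)\|F_\sigma(\lambda,\cdot)\|^2\asymp \lim_R\frac{1}{R}\int_{B(R)}\|\psi_\sigma(\lambda,g)\|_\tau^2\,\diff(gK)$, and Fatou's lemma upgrades this into $\int_0^\infty\nu(\lambda)\|F_\sigma(\lambda,\cdot)\|^2\diff\lambda\leq C\|\psi_\sigma\|_+^2<\infty$. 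Then $f := (\mathcal{F}^\tau)^{-1}((F_\sigma)_\sigma)\in L^2(G,\tau)$ is well-defined via the inverse Plancherel formulas and satisfies $\mathcal{Q}^\tau f=\psi$ by construction. The hardest step in this plan is the dominated-convergence passage in \eqref{asym-Q} and the bookkeeping of the constant $\gamma_0$ through \eqref{d1}; once granted, the factorisation through $\mathcal{P}^\tau$ keeps everything else mechanical.
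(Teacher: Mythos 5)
Your proposal is correct and follows essentially the same route as the paper: factor $\mathcal{Q}^\tau_{\sigma,\lambda}=\nu(\lambda)\mathcal{P}^\tau_{\sigma,\lambda}\circ\mathcal{F}^\tau_{\sigma,\lambda}$, get the upper bound from Proposition \ref{pro-est-Poisson} plus Plancherel, obtain \eqref{asym-Q} by dominated convergence using the pointwise limit \eqref{asym-Poisson16} with the uniform estimate as majorant, and prove surjectivity by inverting the Poisson transform fiberwise via Theorem \ref{main-th-Poisson} and then applying the inverse Plancherel map. Your use of Fatou's lemma in part (2) to pass from the a.e.-$\lambda$ bound to the integrated bound is in fact slightly more careful than the paper's direct integration of the $\sup_R$ estimate, but the argument is otherwise identical.
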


 Notice that  the spectral projection  $\mathcal{Q}_{\sigma, \lambda}^\tau f$  can be written as 
$$
\mathcal{Q}_{\sigma, \lambda}^\tau f(g)=   {\nu(\lambda)}   \mathcal{P}_{\sigma, \lambda}^\tau\left(\mathcal{F}_{\sigma, \lambda}^\tau f\right)(g)
\quad \text{for a.e. } \lambda \in \mathbb{R}.$$
Thus, above theorem will be a   consequence of the following:

\begin{theorem}[Strichartz's conjecture for the Poisson transform] \label{main-th-Poisson} 
Let $\tau\in \Lambda$, $\sigma\in \widehat{M}(\tau)$ and $\lambda\in\mathbb R\setminus\{0\}$.

$(1)$  
 There exists a positive constant $C$ independent of $\lambda$ such that  for any $f\in L^2(K,\sigma)$,
\begin{equation}\label{esti-Poisson16}
C^{-1} \nu(\lambda)^{-1/2}\|f\|_{L^2(K,\sigma )} \leqslant\left\|\mathcal{P}_{\sigma,\lambda}^\tau f\right\|_* \leqslant C \nu_\sigma(\lambda)^{-1/2}\|f\|_{L^2(K,\sigma)} .
\end{equation}
  Furthermore, we have
\begin{equation}\label{asym-Poisson16}
\lim _{R \rightarrow \infty} \frac{1}{R} \int_{B(R)} \diff (gK)\,\left\|\mathcal{P}_{\sigma,\lambda}^\tau f(g)\right\|_{\tau}^2 
=\gamma_0  \nu_\sigma(\lambda)^{-1}\|f\|_{L^2(K,\sigma)}^2 .
\end{equation}
$(2)$  The Poisson transform $\mathcal{P}_{\sigma,\lambda}^\tau$ is a topological isomorphism from $L^2(K,\sigma)$ onto $\mathcal{E}_{\sigma,\lambda}^2(G,\tau)$.

\end{theorem}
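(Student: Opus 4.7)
The upper bound in \eqref{esti-Poisson16} is already supplied by Proposition \ref{pro-est-Poisson}, so the task reduces to (i) establishing the limit formula \eqref{asym-Poisson16}, from which the lower bound follows, and (ii) proving surjectivity onto $\mathcal{E}_{\sigma,\lambda}^2(G,\tau)$. Both pieces rest on the scattering expansion of Poisson integrals announced as Theorem \ref{asymp-Poisson}, which I will use in the schematic form
\[
e^{\rho t}\,\mathcal{P}_{\sigma,\lambda}^\tau F(ka_t) = e^{i\lambda t}\,\mathbf{c}_+(\lambda)\,F(k) + e^{-i\lambda t}\,\mathbf{c}_-(\lambda)\,(A(\sigma,\lambda)F)(k) + E(\lambda,t,k),
\]
where $A(\sigma,\lambda)$ is a (suitably normalized) intertwining operator which is an isometry of $L^2(K,\sigma)$ for $\lambda\in\mathbb{R}\setminus\{0\}$, and $E(\lambda,t,\cdot)$ is an error whose $L^2(K,\sigma)$ norm decays in $t$.

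To obtain \eqref{asym-Poisson16}, I pass to polar coordinates via \eqref{chg-cartan}:
\[
\frac{1}{R}\int_{B(R)}\|\mathcal{P}_{\sigma,\lambda}^\tau F(g)\|_\tau^2\,d(gK) = \frac{1}{R}\int_0^R (2\sinh t)^{n-1}\,dt \int_K \|\mathcal{P}_{\sigma,\lambda}^\tau F(ka_t)\|_\tau^2\,dk.
\]
Substituting the expansion and using $(2\sinh t)^{n-1}e^{-2\rho t} = 1 + O(e^{-2t})$, the integrand produces two diagonal terms and two oscillatory cross terms. The diagonal terms give
\[
\bigl(|\mathbf{c}_+(\lambda)|^2\,\|F\|^2_{L^2(K,\sigma)} + |\mathbf{c}_-(\lambda)|^2\,\|A(\sigma,\lambda)F\|^2_{L^2(K,\sigma)}\bigr)
\]
times a factor that grows linearly in $R$; the cross terms carry oscillation factors $e^{\pm 2i\lambda t}$ and therefore, after integration against the slowly varying measure, yield $O(1)$ contributions. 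After dividing by $R$ and invoking the isometry property of $A(\sigma,\lambda)$, the surviving piece is $(|\mathbf{c}_+(\lambda)|^2 + |\mathbf{c}_-(\lambda)|^2)\,\|F\|^2_{L^2(K,\sigma)}$. A direct matching against the explicit expressions \eqref{c-function1} and \eqref{d1} then identifies this quantity with $\gamma_0\,\nu(\lambda)^{-1}\|F\|^2_{L^2(K,\sigma)}$.

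For surjectivity in (2), let $f\in\mathcal{E}_{\sigma,\lambda}^2(G,\tau)$. The Harish-Chandra type asymptotic expansion of joint eigensections of $\mathbb{D}(G,\tau)$ furnishes boundary coefficients $F_+, F_-$ on $K$ through
\[
e^{\rho t} f(ka_t) = e^{i\lambda t}\mathbf{c}_+(\lambda) F_+(k) + e^{-i\lambda t}\mathbf{c}_-(\lambda) F_-(k) + o_{L^2}(1).
\]
The polar-coordinate computation used above, applied directly to $f$, combined with the finiteness of $\|f\|_*$, forces $F_\pm\in L^2(K,\sigma)$ together with the compatibility relation $F_- = A(\sigma,\lambda) F_+$ (any departure would generate superlinear growth of $\int_{B(R)}\|f\|^2$ in $R$). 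Then $h := f - \mathcal{P}_{\sigma,\lambda}^\tau F_+$ lies in $\mathcal{E}_{\sigma,\lambda}^2(G,\tau)$ and has vanishing asymptotic data, so the same polar integration forces $\|h\|_* = 0$, and irreducibility of the principal series $\pi_{\sigma,\lambda}$ for $\lambda\in\mathbb{R}\setminus\{0\}$ (invoked at \eqref{p1}--\eqref{p2}) closes the argument: $f = \mathcal{P}_{\sigma,\lambda}^\tau F_+$.

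The principal obstacle is the rigorous handling of the remainder $E(\lambda,t,k)$. What is needed is sufficiently strong $L^2(K,\sigma)$ decay in $t$ so that both the diagonal error $R^{-1}\int_0^R \|E(\lambda,t,\cdot)\|_{L^2(K,\sigma)}^2\,dt$ and the mixed products $R^{-1}\int_0^R e^{\pm 2i\lambda t}\langle \mathbf{c}_\pm(\lambda) F,\,E(\lambda,t,\cdot)\rangle_{L^2(K,\sigma)}\,dt$ tend to zero as $R\to\infty$, uniformly in $k$ after integration. This is exactly where the precise asymptotic expansion produced in Section 6 is indispensable, since pointwise decay alone would be insufficient. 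Everything else reduces to bookkeeping with the Iwasawa/Cartan decompositions and the explicit form \eqref{d1} of the Plancherel density.
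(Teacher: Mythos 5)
Your treatment of part (1) coincides with the paper's: both substitute the scattering expansion of Theorem \ref{asymp-Poisson} into the polar-coordinate integral \eqref{chg-cartan}, keep the two diagonal terms, kill the $e^{\pm 2i\lambda t}$ cross terms by Ces\`aro averaging, and control the remainder through the $B^*$-asymptotic equivalence $\simeq$ established in Section 6 (the paper packages this as $\mathcal P_{\sigma,\lambda}^\tau F\simeq\varphi_F$ followed by Cauchy--Schwarz). That part is sound, and your remark that pointwise decay of the error would not suffice is exactly why the equivalence is formulated in the $B^*$ sense.

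Your surjectivity argument, however, has a genuine gap. You begin with ``the Harish-Chandra type asymptotic expansion of joint eigensections furnishes boundary coefficients $F_\pm$ with $o_{L^2}(1)$ error,'' but for an arbitrary $f\in\mathcal E^2_{\sigma,\lambda}(G,\tau)$ this expansion is precisely what cannot be assumed: Theorem \ref{asymp-Poisson} is proved only for Poisson transforms of $L^2$ densities, and extending it to all eigensections of at most linear $L^2$ growth is the hard scattering-theoretic content (the Agmon--H\"ormander type estimates of \cite{Kaizuka}), which the paper never establishes and does not need. Two further steps are unjustified: (i) the compatibility $F_-=A(\sigma,\lambda)F_+$ does not follow from finiteness of $\|f\|_*$ --- two independent $L^2$ coefficients still produce only linear growth of $\int_{B(R)}\|f\|^2$, since the cross terms oscillate; and (ii) ``vanishing asymptotic data forces $\|h\|_*=0$'' is a nontrivial uniqueness (Rellich-type) theorem for eigensections, not a consequence of the polar integration, and note also that $\lim_R R^{-1}\int_{B(R)}\|h\|^2=0$ would not by itself give $\sup_R R^{-1}\int_{B(R)}\|h\|^2=0$. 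The paper sidesteps all of this: by \cite[Theorem 4.16]{olbrich} every $f\in\mathcal E_{\sigma,\lambda}^2(G,\tau)$ is already $\mathcal P_{\sigma,\lambda}^\tau F$ for some hyperfunction $F\in C^{-\omega}(K,\sigma)$; expanding $F$ in $K$-types, using Schur orthogonality to diagonalize $\frac1R\int_{B(R)}\|f\|^2$ over the Eisenstein integrals $\Phi_{\lambda,\delta}$ of \eqref{Eisen}, and applying the limit formula to each finite block of $K$-types yields $\gamma_0\nu(\lambda)^{-1}\sum_{\delta}\frac{d_\sigma}{d_\delta}\sum_j|a_j^\delta|^2\le\|f\|_*^2<\infty$, i.e.\ $F\in L^2(K,\sigma)$. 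To repair your argument you would need either to import Olbrich's theorem at this point or to supply the missing scattering estimates for general eigensections.
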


%The rest of the paper  will be devoted to   the proof of    Theorem \ref{main-th-Poisson} and then Theorem \ref{main-th-proj}.

The rest of the paper is devoted to providing a   proof of Theorem \ref{main-th-Poisson}, followed by a proof of Theorem \ref{main-th-proj}.

\section{Asymptotic expansion for $\tau$-spherical functions} 

Let $(\delta, V_\delta)$ be a   an irreducible representation of $K$.
A function $\Phi\colon G\to \operatorname{End}(V_\delta)$ with   $\Phi(e)=\operatorname{Id}$ is  called $\delta$-spherical if

\quad $(a)$ $\Phi$ is $\delta$-radial, i.e.,
$$ \Phi(k_1gk_2)=\delta(k_2)^{-1} \Phi(g) \delta(k_1)^{-1},\,\; \forall k_1, k_2\in K,\forall g\in G;$$

\quad $(b)$  $\Phi(\cdot)v$ is an eigenfunction for the algebra $\mathbb D(G,\delta)$ for one nonzero
$v\in V_\delta$ (hence for all $v\in V_\delta$).

 Let $\tau\in\Lambda$ and $\sigma\in\widehat M(\tau)$. 
 For $\lambda\in \mathbb{C}$,  define the Eisenstein integral
\begin{equation}\label{spherical}
 { \Phi_{\sigma,\lambda}^\tau(g)}  
 = d_{\tau,\sigma}
 \int_K \diff k\,{\rm e}^{-(i\lambda+\rho)H(g^{-1}k)}\tau(\kappa(g^{-1}k))   P_\sigma\tau(k)^{-1}.
\end{equation}
%\kk{ where $P\sigma$ is the projection of $V_\tau$ onto $V_\sigma$}. 
Then $\Phi_{\sigma,\lambda}^\tau$ is a $\tau$-spherical function on $G$, corresponding to the principal series representation $\pi_{\sigma,\lambda}$, see \eqref{princ-series}. 
%=\operatorname{Ind}_{MAN}^G(\sigma\otimes e^{\rho-i\lambda}\otimes 1)$.
Moreover any $\tau$-spherical function is given by \eqref{spherical}, for some $(\sigma,\lambda)\in    \widehat M(\tau)\times \mathbb{C}$.\\
For   $\Re(i\lambda)>0$, we have the following asymptotic behavior (see e.g. \cite[Proposition 5.2]{BBK2})  
\begin{equation}\label{asym-comp}
\lim_{t\rightarrow \infty}{\rm e}^{(\rho-i\lambda)t} \Phi_{\sigma,\lambda}^\tau(a_t)=d_{\tau,\sigma}\mathbf{c}(\lambda,\tau)P_\sigma, 
\end{equation}
where $\mathbf c(\lambda,\tau)$ is the generalized Harish-Chandra $c$-function given by  
\begin{equation}\label{HC-function}
\mathbf c(\lambda,\tau)
=\int_{\overline{N}}\diff\overline{n}\,{\rm e}^{-(i\lambda+\rho)H(\overline{n})}\tau(\kappa(\overline{n})) \in \mathrm{End}_M(V_\tau).
\end{equation}
Above ${\rm d}\overline n$ denotes the Haar measure on $\overline  N=\theta(N)$ with the normalization $\int_{\overline N} {\rm e}^{-2\rho(H(\overline n))} {\rm d}\overline n=1$.
The integral \autoref{HC-function} converges for $\lambda\in \mathbb C$ such that \(\Re(i\lambda)>0\) and it has a meromorphic continuation to \({\C}\).\

By \cite[Proposition 5.4]{BBK2} we have
$$\begin{aligned}
	\mathbf c(\lambda,\tau_n^\pm)&=\mathbf{c}(\lambda) \mathrm{Id}_{V_{\sigma_{n-1}}}&\text{for}\; n \text{ even},\\
	\mathbf c(\lambda,\tau_n)&=\frac{1}{2}\mathbf{c}(\lambda) \mathrm{Id}_{V_{\sigma_{n-1}^+}} +\;
	\frac{1}{2}\mathbf{c}(\lambda) \mathrm{Id}_{V_{\sigma_{n-1}^-}} &\text{for}\; n \text{ odd},
\end{aligned}$$
where  the scalar   component $\mathbf{c}(\lambda)$ is given in \eqref{c-function1}. 
%Hence, for any $\sigma\in\widehat M(\tau)$, we have
 Therefore, it follows from \eqref{asym-comp} that  
\begin{equation}\label{c-proj}
 \frac{\dim \tau}{\dim\sigma} \mathbf c(\lambda,\tau)P_\sigma = \mathbf{c}(\lambda)P_\sigma,
\end{equation}
 for any $\sigma\in\widehat M(\tau)$. 
%Notice that the  asymptotic \eqref{asym-comp} is no longer  true for $\lambda$ real   because of the oscillating terms at infinity in $  \Phi_{\sigma,\lambda}^\tau(a_t)$.

 According to the Cartan decomposition, $\Phi_{\sigma,\lambda}^\tau$ is completely determined by its restriction to $A$. Since $\Phi_{\sigma,\lambda}^\tau(a_t)\in \mathrm{End}_M(V_\tau)$, it follows by Schur's lemma that $\displaystyle\Phi_{\sigma,\lambda}^\tau(a_t)$ is scalar on each $M$-isotypical  component    of $V_\tau$. Thus, 
 %\kk{\sout{there exists functions $f_{\eta,\lambda}:\mathbb{R}\to \mathbb{C}$ such that $\Phi_{\sigma,\lambda}^\tau(a_t)=\sum_{\eta\in \widehat{M}(\tau)}f_{\eta,\lambda}(t)\operatorname{Id}_{V_\eta}$. Then we set}}
\begin{align*}
	\Phi_{\sigma_{n-1},\lambda}^{\tau_n^\pm}(a_t)&=\varphi^\pm(\lambda,t) \mathrm{Id}_{V_{\sigma_{n-1}}}&& \text{whenever}\; n \;\text{is even},\\
	\Phi_{\sigma_{n-1}^\pm,\lambda}^{\tau_n}(a_t)&=\varphi_{\pm}^+(\lambda,t) \mathrm{Id}_{V_{\sigma_{n-1}^+}} +\;
	\varphi_{\pm}^-(\lambda,t) \mathrm{Id}_{V_{\sigma_{n-1}^-}}&&\text{whenever}\; n \;\text{is odd}.
\end{align*}
The scalar components  $\varphi^\pm$, $\varphi_\pm^+$ and $\varphi_\pm^-$  are given in terms of the Jacobi function 
 \begin{equation*}%\label{jacobi}
 \phi_\lambda^{(\alpha,\beta)}(t)={}_2F_1\left(\frac{i\lambda+\alpha+\beta+1}{2},\frac{-i\lambda+\alpha+\beta+1}{2};\alpha+1; -\sinh^2 t\right),  
\end{equation*}
where  $\alpha, \beta, \lambda\in \mathbb{C}$ with $\alpha\neq -1,-2, \ldots$,  (see, e.g. \cite{CP}\cite{Ko}). 
More precisely, following \cite[Theorem 5.4]{CP}, we have :
\begin{enumerate}[\upshape (1)]
\item   When $n$ is even,   
\begin{equation*}%\label{scal-phi-even}
 \varphi^{\pm}(\lambda, t)=\left(\cosh\frac{t}{2}\right) \phi_{2 \lambda}^{(n / 2-1, n / 2)}\left(\frac{t}{2}\right).  
\end{equation*}
\item   When $n$ is odd,  
\begin{equation*}%\label{scal-phi-odd1}
 \varphi_{\pm}^{+}(\lambda, t) =\left(\cosh \frac{t}{2}\right) \phi_{2 \lambda}^{(n / 2-1, n / 2)}\left(\frac{t}{2}\right) \pm i \frac{2 \lambda}{n}\left(\sinh \frac{t}{2}\right) \phi_{2 \lambda}^{(n / 2, n / 2-1)}\left(\frac{t}{2}\right),	
 \end{equation*}
\begin{equation*}%\label{scal-phi-odd2}
 	\varphi_{\pm}^{-}(\lambda, t) =\left(\cosh \frac{t}{2}\right) \phi_{2 \lambda}^{(n / 2-1, n / 2)}\left(\frac{t}{2}\right) \mp i \frac{2 \lambda}{n}\left(\sinh \frac{t}{2}\right) \phi_{2 \lambda}^{(n / 2, n / 2-1)}\left(\frac{t}{2}\right).
 \end{equation*}
  \end{enumerate}
 
 Recall  that   the function $\phi_\lambda^{\alpha,\beta}$ has the following development (see e.g. \cite{Ko, FJ})
  \begin{equation}\label{key-F-2}
\phi^{(\alpha, \beta)}_\lambda(t)=c_{\alpha, \beta}( \lambda) \Psi_{\lambda}^{\alpha, \beta}(t)+ c_{\alpha, \beta}(- \lambda) \Psi_{- \lambda}^{\alpha, \beta}(t), 
\end{equation}
where %$c_{\alpha, \beta}(\lambda)$ is given by 
$$
c_{\alpha, \beta}(\lambda)=\frac{2^{-i \lambda+\alpha+\beta+1} \Gamma(\alpha+1) \Gamma(i \lambda)}{\Gamma\left(\frac{i \lambda+\alpha+\beta+1}{2}\right) \Gamma\left(\frac{i \lambda+\alpha-\beta+1}{2}\right)},
$$
and  %$\Psi_\lambda^{\alpha,\beta}(t)$  is given
$$
\Psi_\lambda^{\alpha, \beta}(t)=(2 \sinh t)^{i \lambda-\alpha-\beta-1}{ }_2 F_1\left(\frac{\alpha+\beta+1-i \lambda}{2}, \frac{-\alpha+\beta+1-i \lambda}{2} ; 1-i \lambda ;-\frac{1}{\sinh ^2 t}\right) .
$$
Furthermore,
there exists a constant $C>0$ such that
\begin{equation}\label{theta-1}
\Psi_\lambda^{\alpha, \beta}(t)=e^{\left(i \lambda-\alpha-\beta-1 \right) t}\left(1+\mathrm{e}^{-2 t} \Theta(\lambda,t)\right), \text { with } \quad\left|\Theta(\lambda,t)\right| \leq C .
\end{equation}
  for all $\lambda \in \mathbb{R}$ and all $t \geq 1$.

 The main result of this section is to establish, for real parameter $\lambda$,   an asymptotic expansion of the translated $\tau$-spherical functions $\Phi_{\sigma,\lambda}^\tau(g^{-1}x)$, ($x, g\in G$).  
  Before stating  the result let us say,     that $f_1, f_2 \in B^*(G,\tau)$ are asymptotically equivalent, and we denote $f_1\simeq f_2$ if
\begin{equation}\label{asym-means} 
\lim_{R\to \infty} \frac{1}{R}\int_{B(R)} \diff(gK)\,\|f_1(g)-f_2(g)\|_\tau^2   =0.
 \end{equation}

 Let $ {M}^{\prime}$ be the normalizer of $A$ in ${K}$. Then the Weyl group $W={M}^{\prime}/{M}$ has two elements. We denote by $\omega$ a representative of the non-trivial element of $W$  and observe that $\omega\cdot H_0=-H_0$.  Then $\omega$ 
 stabilizes $\sigma_{n-1}$ (for $n$ even) and interchanges $\sigma_{n-1}^+$ and $\sigma_{n-1}^-$ (for $n$ odd).  
   Furthermore,
   $$\begin{aligned}
	\omega\cdot\pi_{\sigma_{n-1},\lambda}&=\pi_{\sigma_{n-1},-\lambda}    \quad  \text{ if $n$ is even},\\
\omega\cdot\pi_{\sigma^{\pm}_{n-1},\lambda}&=\pi_{\sigma^{\mp}_{n-1},-\lambda} \quad \text{ if $n$ is odd}.
\end{aligned}$$
Hence,  we have
\begin{equation}\label{equal}
\begin{aligned}
\Phi^{\tau_n^{\pm}}_{\sigma_{n-1},-\lambda}&=\Phi^{\tau_n^{\pm}}_{\sigma_{n-1},\lambda} & \text{ if $n$ is even},\\
\Phi_{\sigma_{n-1}^{\pm},-\lambda}^{\tau_n}&=\Phi_{\sigma_{n-1}^{\mp},\lambda}^{\tau_n} & \text{ if $n$ is odd}.	
\end{aligned}
\end{equation}

%Also, we  recall  that the non-trivial Weyl group element $w$ stabilizes $\sigma_{n-1}$ (for $n$ even) and interchanges $\sigma_{n-1}^+$ and $\sigma_{n-1}^-$ (for $n$ odd).  
  \begin{proposition}
    Let $\lambda\in\R\setminus\{0\}$. Then for any $x\in G$ we have the following asymptotic expansion   in $B^*(G,\tau)$,
 \begin{equation}\label{main-formula1}
 \Phi_{\sigma,\lambda}^\tau (x)v \simeq   d_{\tau,\sigma}\tau^{-1}(k_2(x)) \sum_{s\in W} e^{(is\lambda-\rho)A^+(x)} \mathbf{c}(\tau,s\lambda)   P_{s\sigma}\tau^{-1}(k_1(x))  v, 
 \end{equation}
 where $v\in V_\tau$ and $x=k_1(x) e^{A^+(x)}k_2(x)$.
  \end{proposition}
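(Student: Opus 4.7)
The plan is to exploit the $\tau$-bi-equivariance of $\Phi^\tau_{\sigma,\lambda}$ to reduce the statement to a radial asymptotic on $A$, and then to extract that asymptotic from the explicit Jacobi-function formulas for the scalar components of $\Phi^\tau_{\sigma,\lambda}(a_t)$. A direct computation from the definition \eqref{spherical}, using the Iwasawa identities $H(k_2^{-1}g^{-1}k) = H(g^{-1}k)$ and $\kappa(k_2^{-1}g^{-1}k) = k_2^{-1}\kappa(g^{-1}k)$, yields $\Phi^\tau_{\sigma,\lambda}(k_1 g k_2) = \tau(k_2)^{-1}\Phi^\tau_{\sigma,\lambda}(g)\tau(k_1)^{-1}$. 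Writing $x = k_1(x)\,a_t\,k_2(x)$ with $t=A^+(x)$, and recalling that the $\tau(k_i(x))^{-1}$ are unitary on $V_\tau$, the statement reduces to the $B^*$-asymptotic
\begin{equation*}
\Phi^\tau_{\sigma,\lambda}(a_t) \;\simeq\; d_{\tau,\sigma}\sum_{s\in W} e^{(is\lambda - \rho)t}\,\mathbf{c}(\tau, s\lambda)\, P_{s\sigma}.
\end{equation*}

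Substituting the Jacobi-function expressions $\Phi^{\tau_n^\pm}_{\sigma_{n-1},\lambda}(a_t) = \varphi^\pm(\lambda,t)\,\Id$ (for even $n$) and $\Phi^{\tau_n}_{\sigma^\pm_{n-1},\lambda}(a_t) = \varphi_\pm^+(\lambda,t)P_{\sigma^+_{n-1}} + \varphi_\pm^-(\lambda,t)P_{\sigma^-_{n-1}}$ (for odd $n$), I would apply the expansion \eqref{key-F-2} and the estimate \eqref{theta-1} to each Jacobi factor, together with $\cosh(t/2),\sinh(t/2) = \tfrac{1}{2}e^{t/2}(1+O(e^{-t}))$. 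This produces a pointwise expansion of each scalar component of the form
\begin{equation*}
\varphi(\lambda,t) = A_+(\lambda)\,e^{(i\lambda-\rho)t} + A_-(\lambda)\,e^{(-i\lambda-\rho)t} + R(\lambda,t),\qquad |R(\lambda,t)| \leq C_\lambda\,e^{-(\rho+1)t}.
\end{equation*}
A direct Gamma-function identification shows that the coefficients $A_\pm$, arising from $c_{n/2-1,n/2}(\pm 2\lambda)$ in the even case and from a combination of $c_{n/2-1,n/2}(\pm 2\lambda)$ and $\pm i(2\lambda/n)c_{n/2,n/2-1}(\pm 2\lambda)$ in the odd case (with a cancellation of the ``cross'' exponential), match $d_{\tau,\sigma}\mathbf{c}(\tau,\pm\lambda)P_\sigma = \mathbf{c}(\pm\lambda)P_\sigma$ via \eqref{c-proj}. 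Using the Weyl-group action recalled in \eqref{equal} ($\omega$ fixes $\sigma_{n-1}$ for even $n$, exchanges $\sigma^\pm_{n-1}$ for odd $n$), the two surviving leading terms repackage as the Weyl sum $\sum_{s\in W} e^{(is\lambda-\rho)t}\mathbf{c}(\tau,s\lambda)P_{s\sigma}$.

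Finally, I would upgrade this pointwise expansion to a $B^*$-equivalence by controlling the remainder. Since unitarity of the $\tau(k_i)$ factors bounds the pointwise norm of the error by $C\|v\|\,|R(\lambda,A^+(x))|$, the polar integration formula \eqref{chg-cartan} with $(2\sinh t)^{n-1}\leq Ce^{2\rho t}$ gives
\begin{equation*}
\int_{B(R)} |R(\lambda,A^+(x))|^2\,d(gK) \;\leq\; C_\lambda \int_0^R e^{2\rho t}\,e^{-2(\rho+1)t}\,dt \;=\; C_\lambda \int_0^R e^{-2t}\,dt \;\leq\; C_\lambda',
\end{equation*}
a bound independent of $R$, so after dividing by $R$ and letting $R\to\infty$ the equivalence in the sense of \eqref{asym-means} follows. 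The main obstacle is the algebraic bookkeeping in the middle step: one must verify by explicit Gamma-function computation that the leading Jacobi coefficients reproduce $d_{\tau,\sigma}\mathbf{c}(\tau,\pm\lambda)$, and in the odd case that the antisymmetric combination of the cosh and sinh contributions precisely cancels the unwanted exponential so that each surviving term carries exactly the projection $P_{s\sigma}$ dictated by the Weyl action.
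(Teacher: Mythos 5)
Your proposal is correct and follows essentially the same route as the paper: reduce via $\tau$-bi-equivariance and the Cartan integration formula to the radial estimate $\|\Phi^\tau_{\sigma,\lambda}(a_t)-d_{\tau,\sigma}\sum_{s\in W}e^{(is\lambda-\rho)t}\mathbf{c}(\tau,s\lambda)P_{s\sigma}\|\lesssim |\mathbf{c}(\lambda)|e^{-\rho-t}$, obtain that estimate for $t\ge 1$ from the Jacobi expansion \eqref{key-F-2}--\eqref{theta-1} (with the cosh/sinh combination producing exactly the projections $P_{s\sigma}$), and absorb the remainder since $\int_0^R(2\sinh t)^{2\rho}e^{-2(\rho+1)t}\,dt$ is bounded uniformly in $R$. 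The only cosmetic gap is that \eqref{theta-1} holds only for $t\ge1$, so you should invoke continuity (as the paper does) to cover $0\le t\le1$, and note in passing that the Weyl-sum term itself lies in $B^*(G,\tau)$.
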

  
  \begin{proof}  
 Let us first  show that the right hand-side of \eqref{main-formula1} belongs to $ B^\ast(G,\tau)$.  
 Using \eqref{c-proj} we easily get 
\begin{align*}
\begin{split}
\frac{1}{R}
\int_{B(R)}\diff x\parallel [\tau^{-1}(k_2(x)) &\sum_{s\in W} e^{(is\lambda-\rho)A^+(x)} d_{\tau,\sigma}\,\mathbf{c}(\tau,s\lambda)   P_{s\sigma}\tau^{-1}(k_1(x))]v\parallel^2\\
&\leq 
\frac{2\parallel v\parallel^2}{R}
\mid \mathbf{c}(\lambda)\mid^2
 \int_0^R \diff t\,{\rm e}^{-2\rho t} (2\sinh t)^{2\rho}\\
&\leq 2 \parallel v\parallel^2\mid \mathbf{c}(\lambda)\mid^2,
\end{split}\end{align*}
for any $R>0$. 
 
 Now we prove the asymptotic expansion. We have 
\begin{align}
\begin{split}
\int_{B(R)}\diff x &\parallel \Phi_{\sigma,\lambda}^\tau(x)v-d_{\tau,\sigma}\tau^{-1}(k_2(x)) \sum_{s\in W} e^{(is\lambda-\rho)A^+(x)} \mathbf{c}(\tau,s\lambda)   P_{s\sigma}\tau^{-1}(k_1(x))v\parallel^2\\
&=\int_0^R   \int_K  \parallel[ \Phi_{\sigma,\lambda}^\tau(a_t)- \sum_{s\in W} e^{(is\lambda-\rho)t} \mathbf{c}(s\lambda) P_{s\sigma}]\tau^{-1}(k)v\parallel^2   (2\sinh t)^{2\rho}\diff k\diff t\,
\end{split}\end{align}
Hence,  \eqref{main-formula1} is an immediate consequence of following estimate: for $\lambda\in \mathbb{R}\setminus\{0\}$ and $t\geq 0$, we have 
\begin{equation}\label{asympt}
\parallel\Phi^\tau_{\sigma,\lambda}(a_t)-d_{\tau,\sigma}\sum_{s\in W} e^{(is\lambda-\rho)t}  \mathbf{c}(\tau,s\lambda) P_{s\sigma}\parallel\leq  c_0 \mid \mathbf{c}(\lambda)\mid{\rm e}^{-\rho-t} 
\end{equation}
%\end{proposition}
 for some $c_0>0$  independent o $\lambda$.\\
%\begin{proof}
By   continuity it is sufficient to prove the above estimate \eqref{asympt} for $t\geq 1$.\\
Suppose  $n$ odd. We first deal with $\sigma=\sigma_{n-1}^+$. 
Using \eqref{key-F-2} \hb{, \eqref{theta-1}} and noting that 
$$\displaystyle \mathbf{c}(\lambda)=\mathbf{c}_{\frac{n}{2}-1, \frac{n}{2}}(2\lambda)=\frac{n}{i2\lambda}\mathbf{c}_{\frac{n}{2}-1, \frac{n}{2}}(2\lambda),$$
  we may rewrite $\varphi_{\sigma_{n-1}^+}^\pm$ as 
\begin{equation*}\begin{split}
\varphi_{\sigma_{n-1}^+}^\pm(\lambda,t)&=\cosh(t/2)\sum_{s\in W}\mathbf{c}(s\lambda){\rm e}^{(is2\lambda-n)t/2}(1+{\rm e}^{-t}\Theta_1(s\lambda,t))\\
&\pm \sinh(t/2)\sum_{s\in W}\frac{\mathbf{c}(s\lambda)}{s}{\rm e}^{(is2\lambda-n)t/2}(1+{\rm e}^{-t}\Theta_2(s\lambda,t))),
\end{split}\end{equation*}
with $\mid \Theta_i(\lambda,t)\mid\leq C_i, i=1, 2$ for all $t\geq 1$. 
This implies that there exists a constant $c_0>0$ such that for all $\lambda\in \mathbb{R}$ and all $t\geq 1$ we have  
$$
\varphi_{\sigma_{n-1}^+}^\pm(\lambda,t)=\sum_{s\in W}\frac{1}{2}\mathbf{c}(s\lambda)(1\pm \frac{1}{s}){\rm e}^{(is\lambda-\rho)t}+{\rm e}^{-t-\rho}\Theta^\pm(\lambda,t),
$$
with $\mid \Theta^\pm(\lambda,t)\mid\leq c_0\mid \mathbf{c}(\lambda)\mid$.

Now, since $\displaystyle\Phi^{\tau_n}_{\sigma_{n-1}^+}(a_t)= \varphi_{\sigma_{n-1}^+}^+(\lambda,t)P_{\sigma_{n-1}^+}+\varphi_{\sigma_{n-1}^+}^-(\lambda,t)P_{\sigma_{n-1}^-}$ we get
$$\begin{aligned}
\Phi^{\tau_n}_{\sigma_{n-1}^+}(a_t)&=\mathbf{c}(\lambda){\rm e}^{(i\lambda-\rho)t}P_{\sigma_{n-1}^+}+\mathbf{c}(-\lambda){\rm e}^{(-i\lambda-\rho)t}P_{\sigma_{n-1}^-}+{\rm e}^{-t-\rho}[\Theta^+(\lambda,t)P_{\sigma_{n-1}^+}+\Theta^-(\lambda,t)P_{\sigma_{n-1}^-}],
\end{aligned}
$$
and the result follows.

The proof in the case $\sigma=\sigma_{n-1}^-$ can be handled in much the same way as for $\sigma=\sigma_{n-1}^+$. When $n$ is even, the proof is similar to the above, so we omit it.

\end{proof}

%\kk{\sout{Next, we will prove the following asymptotic for the translated $\tau$-spherical functions.}}

\begin{proposition}[Key formula]\label{key-formula} Let $\tau\in \Lambda$ and $\sigma\in\widehat M(\tau)$. 
  Let $\lambda\in\R\setminus\{0\}$  and let $g\in G$ be fixed. Then for any $x\in G$ we have the following expansion  in $B^*(G,\tau)$
 $$ \Phi_{\sigma,\lambda}^\tau (g^{-1}x)v 
 \simeq  
 d_{\tau,\sigma}\tau^{-1}(k_2(x)) \sum_{s\in W} e^{(is\lambda-\rho)A^+(x)} 
 e^{(is\lambda-\rho)H(g^{-1}k_1(x))}\mathbf{c}(\tau,s\lambda) P_{s\sigma} \tau^{-1}(\kappa(g^{-1}k_1(x)))v 
 $$
 where $v\in V_\tau$ and $x=k_1(x) e^{A^+(x)}k_2(x)$.
 \end{proposition}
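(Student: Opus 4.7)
The plan is to combine the untranslated asymptotic from the previous proposition with the bi-$K$-equivariance of $\Phi^\tau_{\sigma,\lambda}$ and the Iwasawa decomposition of the left translate $g^{-1}k_1(x)$. Setting $a_t = e^{A^+(x)}$ and using right $K$-equivariance via $x = k_1(x)\,a_t\,k_2(x)$, I would first write
$$\Phi^\tau_{\sigma,\lambda}(g^{-1}x)\,v = \tau(k_2(x))^{-1}\,\Phi^\tau_{\sigma,\lambda}\bigl(g^{-1}k_1(x)\,a_t\bigr)\,v.$$
Writing the Iwasawa factorisation $g^{-1}k_1(x) = \kappa_* a_{r_*} n_*$ with $\kappa_* = \kappa(g^{-1}k_1(x))$, $r_*H_0 = H(g^{-1}k_1(x))$ and $n_* = n(g^{-1}k_1(x))$, I would commute $n_*$ past $a_t$ to obtain $g^{-1}k_1(x)\,a_t = \kappa_* a_{r_*+t}\tilde n_t$ where $\tilde n_t = a_t^{-1} n_* a_t \in N$. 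Since $\operatorname{Ad}(a_t^{-1})$ contracts $\mathfrak n$ by the factor $e^{-t}$, one has $\|\tilde n_t - e\| = O(e^{-t})$. Pulling $\kappa_*$ out on the left by $K$-equivariance then gives
$$\Phi^\tau_{\sigma,\lambda}(g^{-1}x)\,v = \tau(k_2(x))^{-1}\,\Phi^\tau_{\sigma,\lambda}(a_{r_*+t}\tilde n_t)\,\tau(\kappa_*)^{-1}\,v.$$

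The second step is to establish that $\Phi^\tau_{\sigma,\lambda}(a_{r_*+t}\tilde n_t)$ shares the same leading asymptotic as $\Phi^\tau_{\sigma,\lambda}(a_{r_*+t})$ with a comparable remainder. Combining the key pointwise estimate \eqref{asympt} applied at $a_{r_*+t}$ with a first-order expansion of the Eisenstein integral \eqref{spherical} in the $N$-variable (using the smoothness of the Iwasawa maps $H(\cdot)$, $\kappa(\cdot)$ and the decay $\|\tilde n_t - e\| = O(e^{-t})$), I expect to obtain
$$\Phi^\tau_{\sigma,\lambda}(a_{r_*+t}\tilde n_t) = d_{\tau,\sigma}\sum_{s\in W} e^{(is\lambda-\rho)(r_*+t)}\,\mathbf c(\tau,s\lambda)\,P_{s\sigma} + O\bigl(|\mathbf c(\lambda)|\,e^{-(\rho+1)t}\bigr),$$
uniformly in $\lambda\in\R\setminus\{0\}$ and $x$. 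Since $(r_*+t)H_0 = A^+(x) + H(g^{-1}k_1(x))$, substituting this back into the factored form above reproduces the right-hand side of the claimed formula up to a pointwise error bounded by $C\,|\mathbf c(\lambda)|\,e^{-(\rho+1)t}\|v\|$, where $t = \alpha(A^+(x))$.

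To finish, I would integrate this pointwise error against the Cartan measure $(2\sinh t)^{n-1}\,dt\,dk$ on $B(R)$: since the weighted integrand $e^{-2(\rho+1)t}(2\sinh t)^{2\rho}$ is bounded by $C\,e^{-2t}$ and hence integrable, the squared $L^2$-norm of the error over $B(R)$ is uniformly bounded in $R$, so division by $R$ and passage to the limit yields zero, which is exactly the $\simeq$ relation in $B^*(G,\tau)$. The hardest part will be the perturbation estimate comparing $\Phi^\tau_{\sigma,\lambda}(a_{r_*+t}\tilde n_t)$ with $\Phi^\tau_{\sigma,\lambda}(a_{r_*+t})$ with the precise exponential rate $e^{-(\rho+1)t}$ matching \eqref{asympt}; this should follow by differentiating inside the Eisenstein integral and invoking the asymptotic \eqref{theta-1} for the Jacobi functions, together with uniform smoothness of the Iwasawa projections under small $N$-perturbations.
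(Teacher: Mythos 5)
Your argument is correct in outline, but it reaches the formula by a genuinely different mechanism than the paper. The paper applies the untranslated asymptotic directly to the Cartan decomposition of $g^{-1}x$ and then must compare that decomposition with the one of $x$; this is exactly the content of its two appendix lemmas, $A^+(gx)=A^+(x)+H(gk_1(x))+E(g,x)$ with $0<E(g,x)\le e^{2(A^+(g)-A^+(x))}$, and $\lim_{R\to\infty}\tau^{-1}(k_2(ge^{RH_0}))P_\sigma\tau^{-1}(k_1(ge^{RH_0}))=P_\sigma\tau^{-1}(\kappa(g))$, both proved by explicit computation in the Vahlen matrix realization; the remainder is then split into two pieces $I_g+J_g$ and each is shown to be $\simeq 0$ (the second by dominated convergence). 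You instead use bi-$K$-equivariance together with the Iwasawa factorisation of $g^{-1}k_1(x)$ and the contraction $a_t^{-1}n_*a_t\to e$, which packages all of the geometry into a single right-$N$ perturbation estimate for $\Phi^\tau_{\sigma,\lambda}$; this bypasses the Clifford-algebra computations entirely and is the more classical ``origin of the $\mathbf c$-function'' argument. What your route buys is conceptual transparency and independence from the matrix model; what it costs is precisely the step you flag as hardest, which you only sketch. That step does go through: differentiating the Eisenstein integral under a left perturbation $y\mapsto H(yg'),\ \kappa(yg')$ near $y=e$ gives a derivative bounded by $C(1+|\lambda|)e^{-\rho H(g')}$, whence $\|\Phi^\tau_{\sigma,\lambda}(a_u\tilde n_t)-\Phi^\tau_{\sigma,\lambda}(a_u)\|\le C(1+|\lambda|)\,e^{-t}\varphi_0(a_u)\lesssim (1+|\lambda|)(1+u)e^{-t}e^{-\rho u}$, which after squaring against $(2\sinh t)^{2\rho}\,\diff t$ is integrable and vanishes under $\frac1R\int_{B(R)}$. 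Two caveats: your claimed remainder $O(|\mathbf c(\lambda)|e^{-(\rho+1)t})$ \emph{uniformly in} $\lambda$ is an over-claim --- the natural constant is $(1+|\lambda|)$ (growing) rather than $|\mathbf c(\lambda)|$ (decaying), and a harmless factor $(1+t)$ appears --- but since $\lambda$ is fixed in the proposition this does not affect the conclusion; and for small $t$ one may have $r_*+t<0$, so the asymptotic \eqref{asympt} is only invoked for $t$ large, the bounded range contributing nothing after division by $R$.
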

 
 \begin{proof}
  We prove the formula for $n$ odd. It follows from    
the estimate \eqref{main-formula1}, that 
\begin{align}\label{Phi11}
\Phi_\lambda^\tau(g^{-1}x)v\simeq d_{\tau,\sigma}\tau^{-1}(k_2(g^{-1}x))\sum_{s\in W} {\rm e}^{(is\lambda-\rho)A^+(g^{-1}x)} \mathbf{c}(\tau,s\lambda) P_{s\sigma}\tau^{-1}(k_1(g^{-1}x))v.
\end{align}
 Use \eqref{c-proj} and  write the right-hand side of \eqref{Phi11} in the form
\begin{align}
 \tau^{-1}(k_2(x))\sum_{s\in W}  \mathbf{c}(s\lambda)   {\rm e}^{(is\lambda-\rho){A^+(x)+H(g^{-1}k_1(x))}} P_{s\sigma}\tau^{-1}(\kappa(g^{-1}k_1(x))v +R_g(x),
\end{align}
where 
$$\begin{aligned} %\label{Phi21}
R_g(x) &=\tau^{-1}(k_2(g^{-1}x))\sum_{s\in W} {c}(s\lambda)  {\rm e}^{(is\lambda-\rho)A^+(g^{-1}x)} 
P_{s\sigma} \tau^{-1}(k_1(g^{-1}x))v\\
&-\tau^{-1}(k_2(x))\sum_{s\in W}  \mathbf{c}(s\lambda)  {\rm e}^{(is\lambda-\rho)(A^+(x)+H(g^{-1}k_1(x)))} 
 P_{s\sigma}\tau^{-1}(\kappa(g^{-1}k_1(x))v. 
\end{aligned}$$
So, we have to show that $R_g\simeq 0$. To do so, we need the following lemmas, which will be proved in the appendix (see Lemma \ref{cliffA} and Lemma \ref{key-fo11A}).  
\begin{lemma}\label{cliff}   For any $g, x\in G$ we have
\begin{align}\label{AH}
A^+(gx)=A^+(x)+H(gk_1(x))+E(g,x),
\end{align}
where the function $E$ satisfies the estimate
\begin{align}\label{AHapp}
0<E(g,x)\leq {\rm e}^{2(A^+(g)-A^+(x))}.
\end{align}
 \end{lemma}

\begin{lemma}\label{key-fo11}
  For any $g\in G$  we have
\begin{equation*}
\lim_{R\to\infty}   \tau^{-1}(k_2(g e^{RH_0})P_\sigma\tau^{-1}(k_1(g e^{RH_0})=P_\sigma\tau^{-1}(\kappa(g)) 
 \end{equation*}
  \end{lemma}
  Now if we let
  $$\begin{aligned}
  I_g(x)&=\tau^{-1}(k_2(g^{-1}x)) \sum_{s\in W} \mathbf{c}(s\lambda)                          {\rm e}^{(is\lambda-\rho)(A^+(x)+H(g^{-1}k_1(x))}\times \\
  &\hspace{5cm}\times [{\rm e}^{(is\lambda-\rho)E(g^{-1},x)}-1]
P_{s\sigma} \tau^{-1}( k_1(g^{-1}x)) v,
   \end{aligned}
  $$
  and
    $$\begin{aligned}
    J_g(x)=
    \sum_{s\in W} \mathbf{c}(s\lambda)                         & {\rm e}^{(is\lambda-\rho)(A^+(x)+H(g^{-1}k_1(x))}\times\\
   \hspace{3cm} &\times
[\tau^{-1}(k_2(g^{-1}x))P_{s\sigma} \tau^{-1}(k_1(g^{-1}x))-\tau^{-1}(k_2(x))P_{s\sigma}\tau^{-1}(\kappa(g^{-1}k_1(x)))]
  v
     \end{aligned}
  $$
  then using  
  the identity \eqref{AH},  
we can write $R_g(x)$ as  $R_g(x)=I_g(x)+J_g(x)$.
%\begin{align*}
%\begin{split}
%& R_g(x)=
%\underbrace{\tau^{-1}(k_2(g^{-1}x)) \sum_{s\in W} \mathbf{c}(s\lambda)                          {\rm e}^{(is\lambda-\rho)(A^+(x)+H(g^{-1}k_1(x))}[{\rm e}^{(is\lambda-\rho)E(g^{-1},x)}-1]
%P_{s\sigma} \tau^{-1}( k_1(g^{-1}x)) v}_{I_g(x)} \\
%&+
%\underbrace{\sum_{s\in W} \mathbf{c}(s\lambda)                          {\rm e}^{(is\lambda-\rho)(A^+(x)+H(g^{-1}k_1(x))}\times
%[\tau^{-1}(k_2(g^{-1}x))P_{s\sigma} \tau^{-1}(k_1(g^{-1}x))-\tau^{-1}(k_2(x))P_{s\sigma}\tau^{-1}(\kappa(g^{-1}k_1(x)))]
%  v}_{J_g(x)}.
%\end{split}
%\end{align*}
We will now  prove that $I_g\simeq 0$ and $J_g\simeq0$. We have 
\begin{align*}
&\frac{1}{R}\int_{B(R)}\diff {(xK)}\,\parallel I_g(x)\parallel^2  
\leq \\
&2\parallel v\parallel^2 \mid \mathbf{c}(\lambda)\mid^2\sum_{s\in W}
\frac{1}{R}\int_{B(R)}\diff {(xK)}\,{\rm e}^{-2\rho(A^+(x)+H(g^{-1}k_1(x))}\mid {\rm e}^{(is\lambda-\rho)E(g,x)}-1\mid^2 .
\end{align*}
 From \eqref{AHapp}  we have the estimate $|E(g,x)|\leq c_g {\rm e}^{-2A^+(x)}$ and therefore 
 $$ |{\rm e}^{(i\lambda-\rho)E(g,x)}-1|^2\leq (\lambda^2+\rho^2) c_g {\rm e}^{-2A^+(x)},$$ 
 thus,   $I_g\simeq 0$.
 
 It remains to prove that $J_g\simeq 0$.   To do so, we use  the Cartan decomposition $x=ka_th$ and the fact  that $(2\sinh t)^{2\rho} \leq {\rm e}^{2\rho t}$ to obtain
\begin{align*}
\begin{split}
& \frac{1}{R}\int_{B(R)}\diff x\parallel J_g(x)\parallel^2\leq
 2\sum_{s\in W} |\mathbf{c}(s\lambda)|^2 \times \\
& \frac{1}{R}
 \int_0^R \int_K {\rm e}^{-2\rho(H(g^{-1}k))}
\parallel \left( \tau^{-1}(k_2(g^{-1}ka_t)P_{s\sigma}\tau^{-1}(k_1(g^{-1}ka_t))-P_{s\sigma}\tau^{-1}(\kappa(g^{-1}k) \right)v\parallel^2 \diff x\diff t  \\
&=
 2\sum_{s\in W} |\mathbf{c}(s\lambda)|^2 \times \\
 &\int_0^1\int_K{\rm e}^{-2\rho(H(g^{-1}k))}
\parallel \left( \tau^{-1}(k_2(g^{-1}ka_{tR})P_{s\sigma}\tau^{-1}(k_1(g^{-1}ka_{tR}))-P_{s\sigma}\tau^{-1}(\kappa(g^{-1}k) \right)v\parallel^2 \diff x\diff t.\\
\end{split}
\end{align*}
But, by Lemma \ref{key-fo11} we have 
$$
\lim_{R\to \infty}
%\parallel \tau^{-1}(k_1(g^{-1}ka_{Rt})k_2(g^{-1}ka_{Rt}))-\tau^{-1}(\kappa(g^{-1}k))\parallel^2
\parallel \left( \tau^{-1}(k_2(g^{-1}ka_{tR})P_{s\sigma}\tau^{-1}(k_1(g^{-1}ka_{tR}))-P_{s\sigma}\tau^{-1}(\kappa(g^{-1}k) \right)v\parallel^2
=0.
$$
Since 
$${\rm e}^{-2\rho H(g^{-1}k)}
\parallel \left( \tau^{-1}(k_2(g^{-1}ka_{tR})P_{s\sigma}\tau^{-1}(k_1(g^{-1}ka_{tR}))-P_{s\sigma}\tau^{-1}(\kappa(g^{-1}k) \right)v\parallel^2
\leq 2 {\rm e}^{-2\rho(H(g^{-1}k))}\|v\|^2$$
 and noting that $\int_K {\rm e}^{-2\rho H(g^{-1}k)} {\rm d}k=1$ we   obtain by  Lebesgue's dominated convergence theorem that
\begin{align*}
\lim_{R\rightarrow \infty}\int_K{\rm e}^{-2\rho H(g^{-1}k)}
%\parallel \tau^{-1}(k_1(g^{-1}ka_{Rt})k_2(g^{-1}ka_{Rt}))-\tau^{-1}(\kappa(g^{-1}k))\parallel^2 
\parallel \left( \tau^{-1}(k_2(g^{-1}ka_{tR})P_{s\sigma}\tau^{-1}(k_1(g^{-1}ka_{tR}))-P_{s\sigma}\tau^{-1}(\kappa(g^{-1}k) \right)v\parallel^2
{\rm d}k=0.
\end{align*}
Thus $\lim_{R\to\infty}\frac{1}{R}\int_{B(R)}\parallel J_g(x)\parallel^2{\rm d}x=0$, and   the proposition follows.
 
\end{proof}

 For any $\lambda\in\R$, $g\in G$ and $v\in V_\tau$, we define $p_{\sigma,\lambda}^{g,v}\in L^2(K,\sigma)$  by 
  \begin{equation}\label{les-p}
   p_{\sigma,\lambda}^{g,v}(k)  =e^{(i\lambda-\rho)H(g^{-1}k)}P_\sigma \tau(\kappa(g^{-1}k))^{-1}v.
  \end{equation}

 % Let $w$ a representative in $M'/M$ of the nontrivial element of the Weyl group $W$, where $M'* is the normalizer of $A$ in $K$.
  
 The following lemma is easy to establish.
 \begin{lemma}\label{UU} $(1)$ For $\lambda\in\mathbb R\setminus\{0\}$, the set of finite combination of  $p_{\sigma,\lambda}^{g,v}$\,  $(v\in V_\tau$ and $g\in G)$ is a dense subspace of $L^2(K,\sigma)$.% (see eg. \cite{H2}).

$(2)$ For any $s\in W=\{Id,w\}\simeq\{1,-1\}$ and $\lambda\in\mathbb{R}\setminus\{0\}$, There exists a  unique unitary isomorphism  $U_{s,\lambda}: L^2(K,\sigma)\to L^2(K,s\sigma)$ such that   
  \begin{equation}\label{U-p}
  	  U_{s,\lambda} p_{\sigma,\lambda}^{g,v}=p_{s\sigma,s\lambda}^{g,v}
  \end{equation}
  Moreover, for $F_1 \in L^2(K,\sigma)$, $F_2 \in L^2(K,s\sigma)$ we have $\mathcal{P}_{\sigma,\lambda}^\tau F_1=\mathcal{P}_{s\sigma,s\lambda}^\tau F_2$ if and only if $U_{s,\lambda}F_1=F_2$ i.e. $U_{s,\lambda}=\left(\mathcal{P}_{s\sigma,s\lambda}^\tau\right)^{-1}\circ \mathcal{P}_{\sigma,\lambda}^\tau$.
  \end{lemma}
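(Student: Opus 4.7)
The plan is to deduce (1) from the irreducibility of the principal series, then construct $U_{s,\lambda}$ in (2) by combining injectivity of the Poisson transform with the Weyl-identity for $\tau$-spherical functions, and finally upgrade it to a unitary via Schur's lemma applied on the $\tau$-isotypic component.

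For (1), a direct cocycle computation for the Iwasawa projection yields $\pi_{\sigma,\lambda}(h)\, p_{\sigma,\lambda}^{g,v} = p_{\sigma,\lambda}^{hg,v}$, so the span of $\{p_{\sigma,\lambda}^{g,v}\}_{g\in G,\,v\in V_\tau}$ coincides with the $\pi_{\sigma,\lambda}(G)$-invariant subspace generated by the family $\{p_{\sigma,\lambda}^{e,v}: v \in V_\tau\}$. Since $p_{\sigma,\lambda}^{e,v}(k) = P_\sigma \tau(k^{-1}) v$ does not vanish identically when $v \neq 0$, and since $\pi_{\sigma,\lambda}$ is topologically irreducible for $\lambda \in \R\setminus\{0\}$ (the fact already invoked in the paragraph preceding \eqref{p1}), density follows.

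For (2), I would first establish by comparing the defining integral \eqref{Poisson} with the Eisenstein integral \eqref{spherical} (after reducing to $g=e$ via the $G$-intertwining of the Poisson transform with $\pi_{\sigma,\lambda}$) that
$$\mathcal{P}_{\sigma,\lambda}^\tau\, p_{\sigma,\lambda}^{g,v}(x) = \frac{1}{\sqrt{d_{\tau,\sigma}}}\,\Phi_{\sigma,\lambda}^\tau(g^{-1}x)\, v.$$
Combining this with the Weyl-symmetry $\Phi_{\sigma,\lambda}^\tau = \Phi_{s\sigma,s\lambda}^\tau$ from \eqref{equal} and the equality $d_{\tau,\sigma} = d_{\tau,s\sigma}$ yields
$$\mathcal{P}_{\sigma,\lambda}^\tau\Bigl(\sum_i c_i\, p_{\sigma,\lambda}^{g_i,v_i}\Bigr) = \mathcal{P}_{s\sigma,s\lambda}^\tau\Bigl(\sum_i c_i\, p_{s\sigma,s\lambda}^{g_i,v_i}\Bigr).$$
The injectivity of $\mathcal{P}_{s\sigma,s\lambda}^\tau$ on $L^2(K,s\sigma)$ (same paragraph as above) then makes the rule $U_{s,\lambda}: p_{\sigma,\lambda}^{g,v} \mapsto p_{s\sigma,s\lambda}^{g,v}$ well-defined and linear on the dense subspace from (1), and by construction it intertwines $\pi_{\sigma,\lambda}$ with $\pi_{s\sigma,s\lambda}$.

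To upgrade $U_{s,\lambda}$ to a unitary, I would restrict it to the $\tau$-isotypic component of $L^2(K,\sigma)$ for left $K$-translation, which is exactly $\{k \mapsto P_\sigma\tau(k^{-1})v : v \in V_\tau\}$. A standard Schur-lemma computation applied to the $K$-invariant operator $\int_K \tau(k) P_\sigma \tau(k^{-1})\,dk = d_{\tau,\sigma}^{-1}\,\mathrm{Id}_{V_\tau}$ gives $\|p_{\sigma,\lambda}^{e,v}\|_{L^2(K,\sigma)}^2 = \|v\|^2/d_{\tau,\sigma}$, and the same formula with $s\sigma$ in place of $\sigma$ yields an identical value, so $U_{s,\lambda}$ is norm-preserving on the $\tau$-type. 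Since $\pi_{\sigma,\lambda}$ and $\pi_{s\sigma,s\lambda}$ are equivalent irreducible unitary representations for $\lambda\neq 0$, Schur's lemma forces the intertwiner $U_{s,\lambda}$ to be a scalar multiple of a unitary operator, and the preceding norm identity pins the modulus of the scalar to $1$. Extension by continuity then produces the desired unitary $L^2(K,\sigma) \to L^2(K,s\sigma)$; uniqueness is immediate from density. The \emph{moreover} statement follows from the identity $\mathcal{P}_{\sigma,\lambda}^\tau = \mathcal{P}_{s\sigma,s\lambda}^\tau \circ U_{s,\lambda}$ (valid on the dense span, hence everywhere by continuity) together with the injectivity of $\mathcal{P}_{s\sigma,s\lambda}^\tau$. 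The main obstacle I anticipate is precisely this Schur step: one must first verify that $U_{s,\lambda}$ is $G$-intertwining on a dense $\pi$-invariant subspace before invoking Schur's lemma to conclude unitarity, and the Poisson-transform argument in the previous paragraph is exactly what supplies this.
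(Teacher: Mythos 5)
The paper offers no proof of this lemma (it is dismissed with ``The following lemma is easy to establish''), so there is no argument of record to compare against; your proof must be judged on its own terms. Part (1) is correct: the cocycle identity $\pi_{\sigma,\lambda}(h)\,p^{g,v}_{\sigma,\lambda}=p^{hg,v}_{\sigma,\lambda}$ does hold, the span is a nonzero $\pi_{\sigma,\lambda}(G)$-invariant subspace, and the irreducibility invoked by the paper for $\lambda\in\mathbb R\setminus\{0\}$ gives density. In part (2), the identity $\mathcal{P}^\tau_{\sigma,\lambda}p^{g,v}_{\sigma,\lambda}(x)=d_{\tau,\sigma}^{-1/2}\,\Phi^\tau_{\sigma,\lambda}(g^{-1}x)v$, the use of \eqref{equal} together with $d_{\tau,\sigma}=d_{\tau,s\sigma}$, and the well-definedness of $U_{s,\lambda}$ on the dense span via injectivity of $\mathcal{P}^\tau_{s\sigma,s\lambda}$ are all sound, as is the derivation of the ``moreover'' clause once $U_{s,\lambda}$ is in hand.

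The soft spot is the unitarity step. Schur's lemma in the form you invoke — a nonzero intertwiner between equivalent irreducible unitary representations is a scalar multiple of a unitary — applies to bounded, everywhere-defined operators, whereas at that point $U_{s,\lambda}$ is only densely defined and you have verified norm preservation only on the $\tau$-isotypic vectors $p^{e,v}$, which does not give boundedness on the whole span. The gap is fillable along your lines: take a globally defined unitary intertwiner $V$ (it exists by the asserted equivalence $\pi_{\sigma,\lambda}\cong\pi_{s\sigma,s\lambda}$), observe that $V^{-1}U_{s,\lambda}$ preserves the multiplicity-one $\tau$-type and hence acts there as $c\,\mathrm{Id}$ by finite-dimensional Schur, and conclude $U_{s,\lambda}=cV$ on the $G$-orbit of the $\tau$-type, i.e.\ on the entire dense span, with $|c|=1$ from your norm identity. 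But it is cleaner to bypass Schur altogether: for real $\lambda$ one has
$\langle p^{g,v}_{\sigma,\lambda},p^{h,w}_{\sigma,\lambda}\rangle=\langle \pi_{\sigma,\lambda}(h^{-1}g)p^{e,v}_{\sigma,\lambda},p^{e,w}_{\sigma,\lambda}\rangle=d_{\tau,\sigma}^{-1}\langle v,\Phi^\tau_{\sigma,\lambda}(h^{-1}g)w\rangle$,
so by \eqref{equal} the Gram matrix of any finite family $\{p^{g_i,v_i}_{\sigma,\lambda}\}$ coincides with that of $\{p^{g_i,v_i}_{s\sigma,s\lambda}\}$; this yields well-definedness and the isometry property on the whole dense span in one stroke, surjectivity of the extension then following from part (1) applied to $(s\sigma,s\lambda)$.
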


 The following theorem provides a scattering formula for Poisson integrals.
  
 \begin{theorem}\label{asymp-Poisson}
   Let $\tau\in \Lambda$ and let $\sigma\in\widehat M(\tau)$. For $\lambda\in\mathbb R\setminus\{0\}$ and  $F\in L^2(K,\sigma)$,   we have the following asymptotic expansions for the Poisson transform in $B^*(G,\tau)$ 
\begin{equation}\label{asympt30}
 \mathcal P_{\sigma,\lambda}^\tau F(x)\simeq d_{\tau,\sigma}\tau(k_2(g))^{-1} \sum_{s\in W} e^{(is\lambda-\rho)A^+(x)} \mathbf c(s\lambda,\tau) U_{s,\lambda} F(k_1(x)), 
  \end{equation}
 for any $x\in G$.  
  \end{theorem}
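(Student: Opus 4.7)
The strategy is to first prove the scattering formula on the dense subspace of $L^2(K,\sigma)$ spanned by $\{p_{\sigma,\lambda}^{g,v}: g\in G,\,v\in V_\tau\}$ (density by Lemma \ref{UU}(1)), and then extend by continuity. The bridge to Proposition \ref{key-formula} rests on the identity
\[
\mathcal P_{\sigma,\lambda}^\tau p_{\sigma,\lambda}^{g,v}(x) \;=\; \frac{1}{\sqrt{d_{\tau,\sigma}}}\,\Phi_{\sigma,\lambda}^\tau(g^{-1}x)\,v,
\]
which I verify by starting from the defining integral \eqref{spherical} of $\Phi_{\sigma,\lambda}^\tau(g^{-1}x)v$, applying the Iwasawa cocycle identities $H(x^{-1}gk)=H(x^{-1}\kappa(gk))+H(gk)$ and $\kappa(x^{-1}gk)=\kappa(x^{-1}\kappa(gk))$, and then performing the substitution $k'=\kappa(gk)$ on $K$. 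The Jacobian $dk = e^{-2\rho H(g^{-1}k')}\,dk'$, combined with $H(gk)=-H(g^{-1}k')$, cancels against one of the exponential factors and leaves $e^{(i\lambda-\rho)H(g^{-1}k')}P_\sigma\tau(\kappa(g^{-1}k'))^{-1}v = p_{\sigma,\lambda}^{g,v}(k')$, which is exactly the function being tested against the Poisson kernel.

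Armed with this identity, I invoke the Key Formula (Proposition \ref{key-formula}) to expand $\Phi_{\sigma,\lambda}^\tau(g^{-1}x)v$ in $B^*(G,\tau)$. Each summand $e^{(is\lambda-\rho)A^+(x)}e^{(is\lambda-\rho)H(g^{-1}k_1(x))}\mathbf c(\tau,s\lambda)P_{s\sigma}\tau(\kappa(g^{-1}k_1(x)))^{-1}v$ is then recognized, via \eqref{les-p} and Lemma \ref{UU}(2), as $e^{(is\lambda-\rho)A^+(x)}\mathbf c(\tau,s\lambda)\,U_{s,\lambda}(p_{\sigma,\lambda}^{g,v})(k_1(x))$. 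Dividing by $\sqrt{d_{\tau,\sigma}}$ and summing over the Weyl group yields \eqref{asympt30} for $F=p_{\sigma,\lambda}^{g,v}$, and linearity extends it to all finite linear combinations of such elements.

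For the extension to arbitrary $F\in L^2(K,\sigma)$, I work with the seminorm $\mathcal L(f):=\limsup_{R\to\infty}(1/R)\int_{B(R)}\|f(x)\|_\tau^2\,d(xK)$, which satisfies $\mathcal L(f)\leq\|f\|_*^2$ and for which $f_1\simeq f_2$ is equivalent to $\mathcal L(f_1-f_2)=0$. For $F_n\to F$ in $L^2(K,\sigma)$ with each $F_n$ in the dense family, Proposition \ref{pro-est-Poisson} gives $\|\mathcal P_{\sigma,\lambda}^\tau(F-F_n)\|_*\to 0$, while the right-hand side map is continuous from $L^2(K,\sigma)$ into $B^*(G,\tau)$: the unitarity of $U_{s,\lambda}$ combined with polar coordinates and the elementary bound $\int_0^R(2\sinh t)^{n-1}e^{-2\rho t}\,dt\leq CR$ furnishes the same $\|\cdot\|_*$-estimate as Proposition \ref{pro-est-Poisson}. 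A triangle inequality then produces $\mathcal L\bigl(\mathcal P_{\sigma,\lambda}^\tau F-\mathrm{RHS}(F)\bigr)=0$. The main obstacle is the bookkeeping in the Iwasawa change of variables and the correct absorption of the $\sqrt{d_{\tau,\sigma}}$ normalization factors relating $\mathcal P_{\sigma,\lambda}^\tau$ and $\Phi_{\sigma,\lambda}^\tau$; once this identity is in place, the key formula together with the density extension finishes the proof.
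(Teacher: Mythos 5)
Your proof is correct and follows essentially the same route as the paper: reduce to the dense family $p_{\sigma,\lambda}^{g,v}$ by continuity (Lemma \ref{UU}(1) and Proposition \ref{pro-est-Poisson}), identify $\mathcal P_{\sigma,\lambda}^\tau p_{\sigma,\lambda}^{g,v}$ with the translated $\tau$-spherical function, apply the key formula of Proposition \ref{key-formula}, and conclude via $U_{s,\lambda}p_{\sigma,\lambda}^{g,v}=p_{s\sigma,s\lambda}^{g,v}$. Your tracking of the $\sqrt{d_{\tau,\sigma}}$ normalization in the bridge identity is in fact more careful than the paper's (which states the identity without that factor), and your explicit continuity argument for the right-hand side fills in a step the paper only asserts.
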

 \begin{proof}
   Notice that both side of \eqref{asympt30} depend continuously on $F\in L^2(K,\sigma)$. Therefore 
in view  of the item (1)   in Lemma \ref{UU}, to  prove    the asymptotic expansion holds in   $L^2(K,\sigma)$ it is sufficient to prove it for the functions    $F=p^{g,v}_{\sigma,\lambda}$.
Since
$$\mathcal P_{\sigma,\lambda}^\tau  p_{\sigma,\lambda}^{g,v}(x) =\Phi_{\sigma,\lambda}^\tau(g^{-1}x) v,$$
we get by Proposition \ref{key-formula}  
$$\mathcal P_{\sigma,\lambda}^\tau  p_{\sigma,\lambda}^{g,v}(x) \simeq  \tau(k_2(x))^{-1}\sum_{s\in W} c(s\lambda) e^{(is\lambda-\rho)A^+(x)}  p_{s\sigma,s\lambda}^{g,v}(k_1(x)),$$
then we use  the identity \eqref{U-p}  to conclude.
\end{proof}

 \section{ Proof  of  Theorem \ref{main-th-Poisson} -- Strichartz's conjecture for Poisson transform }
 %\begin{proof}[
 \noindent{\bf Proof  of item (1) - Theorem \ref{main-th-Poisson}.}
% ]
For $n$ even or odd, the right hand-side of the estimate \eqref{esti-Poisson16}    follows from Proposition \ref{pro-est-Poisson} and the left hand side form the equality \autoref{asym-Poisson16}.
Now, let us prove \autoref{asym-Poisson16} for $n$ odd.  
Let $F\in L^2(K,\sigma_{n-1}^\pm)$  and let $\varphi_F$ be the $V_{\tau_n}$-valued function defined by  %for $g=k_1g) e^{A^+(g)}k_2(g)$,
$$\varphi_F(g)= \tau_n(k_2(g))^{-1} 
\left( 
\mathbf{c}(\lambda) e^{(i\lambda-\rho)A^+(g)} F(k_1(g)) +  
 \mathbf{c}(-\lambda) e^{(-i\lambda-\rho)A^+(g)} U_{\omega,\lambda} F(k_1(g)) 
 \right).$$
Using change of variables $g=k_1e^{tH_0}k_2$ (see \autoref{chg-cartan}) and the fact that $\tau_n$ and $U_{\omega,\lambda}$ are unitary we have 
$$\begin{aligned}
\frac{1}{R} \int_{B(R)} \diff(gK)& \| \varphi_F(g)\|^2_{\tau_n}   
=2 |\mathbf{c}(\lambda)|^2 \| F\|^2_{L^2(K,\sigma_{n-1}^\pm)} \left(\frac{1}{R} \int_0^R  \diff t\,(2e^{-t}\sinh t)^{n-1} \right)\\
&+ 2 |\mathbf{c}(\lambda)|^2 \Re\left( \int_K \diff k\,\langle F(k),U_{-1,\lambda}F(k)\rangle_{\sigma_{n-1}^\pm} \frac{1}{R}\int_0^R \diff t\,e^{2(i\lambda-\rho)t} (2\sinh t)^{n-1}\right),
\end{aligned}
$$
thus, by taking the limit and observing that $2\mid \mathbf{c}(\lambda)\mid^2=\gamma_0 \nu(\lambda)^{-1}$,  it becomes clair that %we easily see that  
\begin{equation}\label{lim}
\lim_{R\to\infty} \frac{1}{R}  \int_{B(R)} \diff(gK) \| \varphi_F(g)\|^2_{\tau_n} =\gamma_0 \nu(\lambda)^{-1} \|F\|^2_{L^2K,\sigma^\pm)}.
\end{equation}
Furthermore, 
 $$\begin{aligned}
 \frac{1}{R}\int_{B(R)} \diff(gK) \|\mathcal P_{\sigma_{n-1}^\pm}^{\tau_n} F(g) \|^2_{\tau_n}  &=
 \frac{1}{R}\int_{B(R)}\diff(gK)  \Bigl( \| \varphi_F(g)\|^2_{\tau_n} +\| \mathcal P_{\sigma_{n-1}^\pm}^{\tau_n} F(g)  -\varphi_F(g)\|^2_{\tau_n} \\
& +  2\Re \langle \mathcal P_{\sigma_{n-1}^\pm}^{\tau_n} F(g)  -\varphi_f(g), \varphi_F(g)\rangle_{\tau_n} \Bigr),
 \end{aligned}
 $$
hence,   \autoref{asym-Poisson16} follows from \autoref{lim}, Theorem \ref{asymp-Poisson} and the Cauchy-Schwarz inequality.
The proof when $n$ is even can be done in the same way. \\
%\end{proof}

It remains to show that $\mathcal P_{\sigma,\lambda}^\tau$ is an isomorphism. Before, we need to establish some intermediate results.
Let $\tau\in\Lambda$ and $\sigma\in\widehat{M}(\tau)$ 
   of dimension $d_{\sigma}$.  Let $\widehat K(\sigma)\subset \widehat K$ be the subset of unitary equivalence classes of irreducible representations containing    $\sigma$ upon restriction to $M$. Consider an element $(\delta,V_\delta)$   in $\widehat{K}(\sigma).$
   From    \cite{BSilva}  or \cite{IT},  it   follows     that $\sigma$ occurs in $\delta_{|M}$ with multiplicity one, and therefore $\dim \mathrm{Hom}_M(V_\delta, V_\sigma)=1$.  Choose  the orthogonal projection $P_\delta : V_\delta\to V_\sigma$ to be   a generator of $\mathrm{Hom}_M(V_\delta, V_\sigma)$.
 Fix an orthonormal basis    $\{v_j: j=1,\ldots, d_\delta=\dim V_\delta\}$ of $V_\delta$. 
 Then  the set of functions 
$\{  \phi^\delta_j: 1\leqslant j\leqslant d_\delta, \; \delta\in \widehat{K}(\sigma)\}$ defined by 
$$k\mapsto \phi^\delta_j(k)=P_\delta(\delta(k^{-1})v_j) $$ 
is  an orthogonal basis of the space $L^2( K, \sigma )$, see, e.g., \cite{wallach}. 
 Hence, the  Fourier series expansion of each   $ F$ in $L^2( K, \sigma)$ is given by   
$$F(k)=\sum_{\delta\in\widehat{K}(\sigma)}\sum_{j=1}^{d_\delta} a^\delta_{j} \phi^{\delta}_j(k),$$
with 
\begin{equation*}%\label{L2-norm}
\displaystyle \Vert F\Vert^2_{L^2(K,\, \sigma)}=\sum_{\delta\in\widehat{K}(\sigma)} \frac{d_\sigma}{d_\delta} \sum_{j=1}^{d_\delta}\mid a^\delta_{j}\mid^2.   
\end{equation*}

For $\lambda\in\mathbb R\setminus\{0\}$ and   $(\delta,V_\delta)\in\widehat K(\sigma)$, define the following  Eisenstein integrals $\Phi_{\lambda,\delta}  \colon G\to \operatorname{End}(V_\delta,V_\tau)$ by
\begin{equation}\label{Eisen}
  \Phi_{\lambda,\delta}(g)(v)=  \sqrt{d_{\tau,\sigma}} \int_K \diff k\,{\rm e}^{-(i\lambda+\rho)H(g^{-1}k)}\tau(\kappa(g^{-1}k))   P_\delta(\delta(k^{-1}) v).
\end{equation}

We will now prove  that the Poisson transform is a surjective map from $L^2(K,\sigma)$ onto $\mathcal E^2_{\sigma,\lambda}(G,\tau)$. 
 
 Let $f\in \mathcal E_{\sigma,\lambda}^2(G,\tau)$.
Since $\lambda\in\mathbb R\setminus\{0\}$, then by \cite[Theorem 4.16]{olbrich}, there exists $F\in C^{-\omega}(K,\sigma)$ such that $\mathcal P_{\sigma,\lambda}^\tau F=f$.
Let  $F(k)=\sum_{\delta\in\widehat K(\sigma )}  \sum_{j=1}^{d_\delta} a^\delta_{j}P_\delta (\delta(k^{-1}))v_j$   be its Fourier expansion. Observing that  $\Phi_{\lambda,\delta}(g)v_j=\mathcal{P}_{\sigma,\lambda} (P_\delta \delta()^{-1}v_j)(g)$ we can easily see that
 $$
f(g)=\sum_{\delta\in \widehat{K}(\sigma)} {\sum_{j=1}^{d_\delta}}a^\delta_j \Phi_{\lambda,\delta}(g)v_j
$$
in $C^\infty(G,\tau)$. By the Schur  orthogonality  relations we have 
$$
\frac{1}{R}\int_{B(R)}\diff (gK)\parallel f(g)\parallel^2
=\sum_{\delta\in \widehat{K}(\sigma)}{\sum_{j=1}^{d_\delta}}\mid a^\delta_j\mid^2 \frac{1}{R}\int_{B(R)}\diff (gK)\parallel \Phi_{\lambda,\delta}(g)v_j\parallel^2,
$$
hence
$$\sum_{\delta\in \widehat{K}(\sigma)}{\sum_{j=1}^{d_\delta}}\mid a^\delta_j\mid^2 \frac{1}{R}\int_{B(R)}\diff (gK)\parallel \Phi_{\lambda,\delta}(g)v_j\parallel^2<\parallel f\parallel^2_\ast<\infty,
$$
for any $R>0$.

In the above inequality  the summation over $\delta\in \widehat K(\sigma)$   remains true in particular over $\delta\in \Delta$ for any finite subset $\Delta\subset \widehat K(\sigma)$. 
As
$$
\lim_{R\to \infty}\frac{1}{R}\int_{B(R)}\diff (gK)\parallel \Phi_{\lambda,\delta}(g)v_j\parallel^2=\gamma_0\nu(\lambda)^{-1}\frac{d_\sigma}{d_\delta},
$$
we get 
$$
\|f\|_*^2\geq   \gamma_0 \nu(\lambda)^{-1} \sum_{\delta\in \Delta}\sum_{j=0}^{d_\delta} \frac{d_\sigma}{d_\delta} |a_j^\delta|^2,
$$
and since $\Delta$ is arbitrary, we obtain
$$\gamma_0 \nu(\lambda)^{-1} 
\sum_{\delta\in\widehat K(\sigma)}  \sum_{j=1}^{d_\delta}  \frac{d_{\sigma} }{d_\delta}|a_j^\delta|^2\leq \|f\|_*^2<\infty,$$
which proves that $F\in L^2(K,\sigma)$ and completes the proof of the theorem.  \\

%\end{proof}

    \section{Proof  of  Theorem \ref{main-th-proj} - Strichartz's conjecture for spectral projections}  
     We will present the proof for the case where  $n$  is odd. The argument for even  $n$  follows in a similar manner.
 % We will give the proof for $n$ odd. The case when $n$ is even can be proved in the same way.

%\begin{proof}[
\noindent{\bf Proof of   item (1) -- Theorem \ref{main-th-proj}.}
%]
 Let $f\in L^2(G,\tau_n)$. Since
 $\mathcal{Q}_{\sigma, \lambda}^\tau f=   {\nu(\lambda)}   \mathcal{P}_{\sigma, \lambda}^\tau\left(\mathcal{F}_{\sigma, \lambda}^\tau f\right)$
 for a.e.  $\lambda \in \mathbb{R}\setminus\{0\}$, the uniform estimate \autoref{esti-Poisson-odd} of the Poisson transform    implies
$$\begin{aligned}
 \sup_{R>0}\frac{1}{R} \int_{B(R)}\diff(gK)\left\|\mathcal{Q}_{\lambda}^{\tau_n} f(g)\right\|_{\tau_n}^2   
 &= \sup_{R>0}\frac{1}{R} \int_{B(R)}\diff(gK)\left\|\mathcal{Q}_{\sigma_{n-1}^+,\lambda}^{\tau_n} f(g)\right\|_{\tau_n}^2   \\
 &\qquad \qquad \qquad  +\sup_{R>0}\frac{1}{R} \int_{B(R)}\diff(gK)\left\|\mathcal{Q}_{ \sigma_{n-1}^-, \lambda}^{\tau_n} f(g)\right\|_{\tau_n}^2   \\
  &\leq C^2  \nu(\lambda) \| \mathcal F_{\sigma_{n-1}^+,\lambda}^{\tau_n}f   \|^2_{L^2(K,\sigma_{n-1}^+)} %\\
  %&\qquad \qquad \qquad
  + C^2   \nu(\lambda) \| \mathcal F_{\sigma_{n-1}^-,\lambda}^{\tau_n}f    \|^2_{L^2(K,\sigma_{n-1}^-)},  \\
\end{aligned}
$$ 
and therefore from   the Plancherel  formula \autoref{Plan-Formula-odd}   we get
$$\begin{aligned}
 \| \mathcal Q^{\tau_n} f\|_+^2=\sup_{R>0} \int_0^\infty \diff\lambda \frac{1}{R} \int_{B(R)}\diff(gK)\left\|\mathcal{Q}_{\lambda}^{\tau_n} f(g)\right\|_{\tau_n}^2  
  &\leq  C^2    \| f \|^2_{L^2(G,\tau_n)},
\end{aligned}
$$ 
which proves the right hand side of \autoref{esti-main1}.  
Further, by \autoref{asym-Poisson16} and  \autoref{esti-Poisson16}  we have successively 
$$\begin{aligned}
 \lim_{R\to\infty}\frac{1}{R} \int_{B(R)} \diff(gK)\left\|\mathcal{Q}_{\lambda}^{\tau_n} f(g)\right\|_{\tau_n}^2  
  &=\gamma_0\nu(\lambda)    \| \mathcal F_{\sigma_{n-1}^+,\lambda}^{\tau_n}f   \|^2_{L^2(K,\sigma_{n-1}^+)} + \gamma_0\nu(\lambda)  \| \mathcal F_{\sigma_{n-1}^-,\lambda}^{\tau_n}f    \|^2_{L^2(K,\sigma_{n-1}^-)},
\end{aligned}$$
and  
$$ 
\begin{aligned}
\frac{1}{R} \int_{B(R)}\diff(gK)\left\|\mathcal{Q}_{\lambda}^{\tau_n} f(g)\right\|_{\tau_n}^2  &\leq 
C^2  \nu(\lambda) \|\mathcal F_{\sigma_{n-1}^+,\lambda}^{\tau_n}f   \|^2_{L^2(K,\sigma_{n-1}^+)} + C^2   \nu(\lambda) \| \mathcal F_{\sigma_{n-1}^-,\lambda}^{\tau_n}f    \|^2_{L^2(K,\sigma_{n-1}^-)}.	
\end{aligned}
$$
Then by the Lebesgue dominated convergence and the Placherel formula \autoref{Plan-Formula-odd} we get
$$ \begin{aligned}
\lim_{R\to\infty}\frac{1}{R}  \int_0^\infty \diff\lambda\int_{B(R)}\diff(gK)\left\|\mathcal{Q}_{\lambda}^{\tau_n} f(g)\right\|_{\tau_n}^2 
&=\int_0^\infty \diff\lambda \Bigl(\gamma_0 \nu(\lambda)  \| \mathcal F_{\sigma_{n-1}^+,\lambda}^{\tau_n} f   \|^2_{L^2(K,\sigma_{n-1}^+)}  \\
&  + \gamma_0 \nu(\lambda) \| \mathcal F_{\sigma_{n-1}^-,\lambda}^{\tau_n} f   \|^2_{L^2(K,\sigma_{n-1}^-)}\Bigr)  \\
&=\gamma_0\|f\|^2_{L^2(G,\tau_n)},
\end{aligned}$$
which prove \autoref{asym-Q} and  indeed the left hand side of \autoref{esti-main1}. \\
%\end{proof}

%\begin{proof}[
\noindent{\bf Proof of item (2) -- Theorem \ref{main-th-proj}.}
%]
 Linearity together with  \autoref{esti-main1} imply the injectivity of  $\mathcal Q^{\tau_n}$. It remains then to prove that $ \mathcal Q^{\tau_n}$ is onto. 
 Let $\psi \in\mathcal E^2_{\mathbb R_+}(G,\tau_n)$. Then for any $\lambda$, $\psi (\lambda,\cdot) =\psi _{+,\lambda}+\psi _{-,\lambda}$ with $\psi _{\pm,\lambda}\in 
 \mathcal E_{\sigma_{n-1}^\pm,\mathbb R_+}^2(G,\tau_n)$.
Since
$$\sup_{R>0} \frac{1}{R}\int_{B(R)} \diff(gK) \|\psi_{\pm,\lambda} (g)\|_{\tau_n}^2  <\infty \quad \text{a.e.}\; \lambda\in (0,\infty),$$
then applying    Theorem \ref{main-th-Poisson}  we can assert the existence of $F_{\pm,\lambda}\in L^2(K,\sigma_{n-1}^\pm)$  such that 
for a.e. $\lambda$ we have $\psi_{\pm,\lambda}=\nu(\lambda)\mathcal P^{\tau_n}_{\sigma_{n-1}^\pm,\lambda} F_{\pm,\lambda}$.
Furthermore, it follows from the estimate \eqref{esti-Poisson16} that
$$\begin{aligned}
\sup_{R>0}\frac{1}{R}  \int_{B(R)} \diff(gK)\|\psi_{\pm,\lambda} (g)\|_{\tau_n}^2   
&\geq
C^{-2} \nu(\lambda)\int_K \diff k\,\|F_{\pm,\lambda}(k)\|^2_{\sigma_{n-1}^\pm}, 
\end{aligned}
$$
hence
$$ 
C^{-2}\int_0^\infty   \int_K \diff k\|F_{\pm,\lambda}(k)\|^2_{\sigma_{n-1}^\pm} \nu(\lambda)\diff\lambda \leq \|\psi_{\pm,\lambda} \|_+^2 <\infty,$$
 which proves that 
 $$F_\lambda=F_{+,\lambda}+F_{-,\lambda} \in L^2(\R_+,\nu(\lambda)d\lambda ;L^2(K,\sigma_{n-1}^+))\oplus
  L^2(\R_+,\nu(\lambda)d\lambda ;L^2(K,\sigma_{n-1}^-)).$$ 
  Now, by the Plancherel theorem, there exists $f\in L^2(G,\tau_n)$ such that 
  $$F_\lambda=\mathcal F^\tau f(\lambda)=(\mathcal F_{\sigma_{n-1}^+,\lambda}^{\tau_n} f,\mathcal F_{\sigma_{n-1}^-,\lambda}^{\tau_n} f) ,$$ 
  thus, for any $g\in G$,
  $$\begin{aligned}
\psi(\lambda,g) &=\nu(\lambda) \mathcal P_{\sigma_{n-1}^+,\lambda}^{\tau_n} \left( \mathcal F_{\sigma_{n-1}^+,\lambda}^{\tau_n} f \right)(g) +
 \nu(\lambda) \mathcal P_{\sigma_{n-1}^-,\lambda}^{\tau_n} \left( \mathcal F_{\sigma_{n-1}^-,\lambda}^{\tau_n} f \right)(g)\\
 & =\mathcal Q_\lambda^{\tau_n}( f)(g)=\mathcal Q^{\tau_n} f(\lambda,g).
 \end{aligned}$$
 This completes the proof of Theorem \ref{main-th-proj}.
 %\end{proof}

  \section{Appendix}
   In this appendix we will provide the proof the lemmas \ref{key-fo11} and Lemma \ref{cliff}, we have used to show the key formula in Proposition \ref{key-formula}.

There is a useful matrix realization of  the conformal spin group $\Spin_0(1,n)$    which 
 mimics the use of $\mathrm{SU}(1,1)$ or $\mathrm{SL}(2,\mathbb R)$. This realization makes possible to see the group $\Spin_0(1,n)$ acting either conformally on $S^{n-1}\simeq \partial H^n(\mathbb R)$ or as the double cover of the group of orientation preserving Möbius transformations of the unit ball $B^n\simeq H^n(\mathbb R)$.
  This approach has been  initiated by Vahlen \cite{Vahlen}  and 
brought to the fore independently  by Ahlfors \cite{Ahlfors}   and  Takahashi \cite{takahashi}.

Here are some definitions and properties for Vahlen matrices.
We begin by recalling basic knowledge related to Clifford algebras (see e.g. \cite{del-al}, \cite{LM}).  Let $(e_1,\cdots, e_{n})$ be the natural basis of $\R^{n}$. The Clifford algebra $\cl_{0,n}= \cl_n=\cl(\R^{n})$ will be  the associative algebra (with unit 1) generated by $\R^{n}$ subject to the relations 
$$xy+yx=-2\langle x,y\rangle\quad (x, y\in\R^{n}).$$
In particular
$$e_i^2=-1 \; \text{ and } \;  e_ie_j=-e_je_i \; \text{ for } i\not=j.$$

  Recall the three involutions of the Clifford algebra $\cl_{0,n}$: 

 -- the Clifford conjugaison defined on vectors by $\bar x=-x$ and satisfies $\overline{ab} = \bar b\bar a$;
 
 -- the main involution defined on vectors by $x'=-x$ and satisfies $(ab)' = a' b'$;
 
 -- the reversion defined on vectors by $x^*=x$ and satisfies $(ab)^* = b^* a^*$; which  is in fact the composition of the Clifford conjugaison and main involution: $a^*=\bar a'=\overline{a'}$.

There exists a canonical inner product on $\cl_{0,n}$ extending the inner product on $\R^{n}$ and we will denote the  corresponding norm   by $|\cdot |$.

The Clifford group $\Gamma_{0,n}$ is the set of all elements of the Clifford algebra $\cl_{0,n}$ which can be written as products of non-zero vectors in $\cl_{0,n}^1$.

 As was observed by Vahlen, the  conformal spin group $\Spin_0(1,n)\subset \cl_{1, n} \simeq \cl_{0, n-1} \otimes \cl_{1,1} \simeq \cl_{0, n-1} \otimes \mathcal{M}_2(\mathbb{R}) \simeq\mathcal{M}_2\left(\mathcal{C} \ell_{0, n-1}\right)$ 
 can be  realized as the group
 $$\underline G=\left\{\begin{pmatrix} a & b\\ b' &a'\end{pmatrix} \; :\; a, b\in \Gamma_{0,n-1}\cup\{0\}, \; b a^*\in \R^{1,n-1} \; \text{ and }\; |a|^2 -|b|^2=1 %|a|^2-|b|^2=1
  \right\}.$$
 The set 
 $$\underline K=\left\{\begin{pmatrix} u&0\\ 0&u' \end{pmatrix}\; :\; u\in \Gamma_{0,n-1}, \text{ and }\, |u|^2=1\right\},$$ 
 is  a maximal compact subgroup of $\underline G$, isomorphic to $\Spin(n)$.  We use the identification
\begin{equation*}
\begin{pmatrix} u&0\\ 0&u' \end{pmatrix}\in \underline K \leftrightarrow  u\in \Gamma_{0,n-1}^0
\end{equation*}
where $\Gamma_{0,n-1}^0$ is the subset of elements in $\Gamma_{0,n-1}$ with norm equal 1.

The group $\underline G$ has the Iwasawa and  Cartan decompositions 
$$\underline G=\underline K\,\underline A\,\underline N, \quad G=\underline K\{a_t; t\geq 0\}\underline K$$ where $a_t=\begin{pmatrix}
\cosh t&\sinh t\\\sinh t&\cosh t
\end{pmatrix}$,
$\underline A=\{ a_t; t\in \mathbb{R}\}$ and $\underline N$ is a subgroup of $\underline G$ consisting of matrices of the form  $n_x=\begin{pmatrix}
1-x&x\\-x&1+x
\end{pmatrix}$ with $x\in\mathbb{R}^{n-1}$.

Let $g=\begin{pmatrix}
a&b\\a'&b'
\end{pmatrix}\in \underline G$ and write $g=\kappa(g){\rm e}^{H(g)}n$ with respect to the Iwasawa decomposition. Then we have 
\begin{align}\label{Iwa}
H(g)=\log \mid a+b\mid,
\end{align}
and
\begin{align}\label{Iwa1}
 \kappa(g)=\begin{pmatrix}
\frac{a+b}{\mid a+b\mid}&0\\0&\frac{a'+b'}{\mid a+b\mid}
\end{pmatrix}.
\end{align} 
Also, throughout the Cartan decomposition we may  write $g=k_1(g){\rm e}^{A^+(g)}k_2(g)$ with
\begin{align}\label{Cart}
A^+(g)=\log (\mid a\mid+\mid b\mid),
\end{align}
and 
\begin{align}\label{Cart1}
k_1(g)k_2(g)=\begin{pmatrix}
\frac{a}{\mid a\mid}&0\\0&\frac{a'}{\mid a\mid}
\end{pmatrix}.
\end{align}

Now we shall prove the two lemmas  using the above realization. For the convenience of the reader, we will recall their statement.

\begin{lemmab}\label{cliffA}   For any $g, x\in G$ we have
$$
A^+(gx)=A^+(x)+H(gk_1(x))+E(g,x),
$$
where the function $E$ satisfies the estimate
$$
0<E(g,x)\leq {\rm e}^{2(A^+(g)-A^+(x)}.
$$
\end{lemmab}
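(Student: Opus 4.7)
The plan is to use the Vahlen matrix realization from this appendix. By right $K$-invariance of $A^+$, writing $x = k_1(x)\, a_t\, k_2(x)$ with $t := A^+(x)$, we have $A^+(gx) = A^+(h a_t)$ where $h := g\, k_1(x)$; moreover $A^+(h) = A^+(g) =: s$. Representing $h = \begin{pmatrix}\alpha & \beta \\ \beta' & \alpha'\end{pmatrix}$ in Vahlen form, formulas \eqref{Iwa}--\eqref{Cart} give $H(h) = \log|\alpha+\beta|$ together with $|\alpha|^2-|\beta|^2 = 1$. A direct matrix multiplication produces $h a_t = \begin{pmatrix}A & B \\ B' & A'\end{pmatrix}$ with $A = \alpha\cosh t + \beta\sinh t$, $B = \alpha\sinh t + \beta\cosh t$, and the decisive identities
$$A + B = (\alpha+\beta)e^t,\qquad A - B = (\alpha-\beta)e^{-t},\qquad |A|^2-|B|^2 = |\alpha|^2-|\beta|^2 = 1.$$
Applying \eqref{Iwa} and \eqref{Cart} to $h a_t$, we read off
$$E(g,x) = A^+(h a_t) - t - H(h) = \log\frac{|A|+|B|}{|A+B|},$$
so that $E(g,x) \geq 0$ follows at once from the triangle inequality $|A+B| \leq |A|+|B|$.

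For the upper bound, the parallelogram law $|A+B|^2+|A-B|^2 = 2(|A|^2+|B|^2)$ together with $|A|^2-|B|^2 = 1$ yield the closed form
$$e^{2A^+(h a_t)} = \tfrac12\bigl(P + Q + \sqrt{(P+Q)^2 - 4}\bigr),\qquad P := |A+B|^2,\ \ Q := |A-B|^2,$$
so that $e^{2E(g,x)} = (P+Q+\sqrt{(P+Q)^2-4})/(2P)$ with $PQ = |\alpha+\beta|^2|\alpha-\beta|^2 \geq 1$. Using the identity $(P+Q)^2-4 = (P-Q)^2 + 4(PQ-1)$, subadditivity $\sqrt{X^2+Y^2}\leq |X|+Y$ for $Y\geq 0$, and the Cauchy--Schwarz bound
$$PQ-1 = \sinh^2(2s) - 4\langle\alpha,\beta\rangle^2 \leq \sinh^2(2s),$$
we obtain $\sqrt{(P+Q)^2-4}\leq |P-Q| + 2\sinh(2s)$.

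The proof is then completed by a case analysis on the sign of $P-Q$. Parametrize $2\langle\alpha,\beta\rangle = c\sinh(2s)$ with $c \in [-1,1]$, so that $|\alpha\pm\beta|^2 = \cosh(2s) \pm c\sinh(2s)$. In the regime $P \geq Q$, the claim reduces to the elementary inequality
$$\sinh(2s)\sqrt{1-c^2} \leq 2 e^{2s}\bigl(\cosh(2s) + c\sinh(2s)\bigr),\qquad c\in[-1,1],$$
which I verify by checking that the resulting quadratic in $c$ has negative discriminant when $s>0$; combined with $\log(1+x)\leq x$ this yields $e^{2E}\leq 1 + 2e^{2(s-t)}$ and hence $E \leq e^{2(s-t)}$. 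In the complementary regime $P < Q$ (which forces $t<s$), the bound $\tfrac12\log(|\alpha-\beta|/|\alpha+\beta|) \leq s$ from $|\alpha\pm\beta| \in [e^{-s},e^s]$ leads instead to $E \leq 2(s-t) + \tfrac12\log 2 \leq e^{2(s-t)}$. The main obstacle is precisely this asymmetric case analysis: the two regimes require essentially different estimates and patching them together is the delicate step. (I note in passing that the \emph{strict} positivity $E>0$ asserted in the statement fails on the degenerate locus where $A$ and $B$ are positive scalar multiples -- e.g.\ when $g\in K$ -- but this is harmless since the application in Proposition \ref{key-formula} only uses $0\leq E \leq c_g\, e^{-2A^+(x)}$.)
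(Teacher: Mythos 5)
Your setup coincides with the paper's: pass to $h=gk_1(x)$ via right $K$-invariance, use the Vahlen entries to get $A\pm B=(\alpha\pm\beta)e^{\pm t}$, and identify $E=\log\bigl((|A|+|B|)/|A+B|\bigr)$, whence $E\ge 0$ by the triangle inequality. Your parenthetical is also right: positivity is not strict in general (e.g.\ $g\in K$ gives $E=0$; the paper's assertion ``$H(y)<A^+(y)$ for all $y$'' is only non-strict), and only $E\ge 0$ is used in Proposition \ref{key-formula}. Where you genuinely diverge is the upper bound. The paper obtains it in two lines: since $2A=(A+B)+(A-B)$ and $2B=(A+B)-(A-B)$, one has $|A|+|B|\le |A+B|+|A-B|$, hence
$e^{E}\le 1+\tfrac{|A-B|}{|A+B|}=1+e^{-2t}\tfrac{|\alpha-\beta|}{|\alpha+\beta|}\le 1+e^{-2t}\tfrac{|\alpha|+|\beta|}{|\alpha|-|\beta|}=1+e^{2(s-t)}$,
and then $E\le\log\bigl(1+e^{2(s-t)}\bigr)\le e^{2(s-t)}$. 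No parallelogram law, no closed form for $A^+(ha_t)$, and no case analysis are needed. Your route through $e^{2A^+(ha_t)}=\tfrac12\bigl(P+Q+\sqrt{(P+Q)^2-4}\bigr)$ and the dichotomy on the sign of $P-Q$ does check out (the inequality $\sinh(2s)\sqrt{1-c^2}\le 2e^{2s}(\cosh 2s+c\sinh 2s)$ holds, as does $w+\tfrac12\log 2\le e^{w}$ for $w>0$), so the proof is correct; but the ``delicate patching'' you flag is an artifact of the method rather than an intrinsic difficulty. What your heavier bookkeeping buys is an exact expression for $e^{2E}$, which is more than the lemma requires; the paper's bound $|A|+|B|\le|A+B|+|A-B|$ is the shortcut that collapses both of your regimes into one.
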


\begin{proof}
Put $E(g,x)=A^+(gx)-A^+(x)-H(gk_1(x))$ and write  $x=k{\rm e}^{tH_0}h$ with respect to the Cartan decomposition. Then
$$
E(g,x)=A^+(gk{\rm e}^{tH_0})-H(gk)-t.
$$
  Since  \eqref{Iwa} and \eqref{Cart}  imply   $H(y)< A^+(y)$ for any $y\in G$, we  get $E(g,k{\rm e}^{tH_0})> 0$ .
 
Next, let $g=\begin{pmatrix}
a&b\\b'&a'
\end{pmatrix}$ and $k=\begin{pmatrix}
u&0\\0&u'
\end{pmatrix}$. Then
\begin{align}
E(g,x)=\log[\mid \cosh t\,au+\sinh t\,bu'\mid+\mid \sinh t\, au+\cosh t\,bu'\mid]-\log\mid au+bu'\mid-t,
\end{align}
and a direct calculation shows that 
\begin{align*}
E(g,x)\leq \log \left(1+{\rm e}^{-2t}\, \frac{\mid a\mid+\mid b\mid}{\mid a\mid-\mid b\mid}\right).
\end{align*}
Therefore the upper estimate   in the lemma follows from the above inequality and the fact that $\displaystyle\frac{\mid a\mid+\mid b\mid}{\mid a\mid-\mid b\mid}={\rm e}^{2\,A^+(g)}$.
\end{proof}

\begin{lemmab}\label{key-fo11A} Let $\tau\in \Lambda$ and $\sigma\in\widehat{M}(\tau)$.
  For any $g\in G$  we have
\begin{equation*}%\label{lm1}
\lim_{R\to\infty}   \tau^{-1}(k_2(g e^{RH_0})P_\sigma\tau^{-1}(k_1(g e^{RH_0})=P_\sigma\tau^{-1}(\kappa(g)) 
 \end{equation*}
  \end{lemmab}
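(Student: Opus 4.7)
The proof will take place inside the Vahlen-matrix realization of $\Spin_0(1,n)$ set up in the appendix. Writing $g = \begin{pmatrix} a & b \\ b' & a' \end{pmatrix} \in \underline G$ in this realization, the matrix $g a_R$ has top-row entries $A := a\cosh R + b\sinh R$ and $B := a\sinh R + b\cosh R$. Applying the Cartan formulas \eqref{Cart} and \eqref{Cart1} to $g a_R = k_1(g a_R) a_{A^+(g a_R)} k_2(g a_R)$ and writing the representatives $k_i(g a_R) \leftrightarrow u_i \in \Gamma^0_{0,n-1}$, a direct matrix computation yields the two identities $u_1 u_2 = A/|A|$ and $u_1 u_2' = B/|B|$. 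Both are invariants of the Cartan $M$-ambiguity $(u_1,u_2) \sim (u_1 m, m^{-1} u_2)$, since the realization of $M$ inside $\Gamma^0_{0,n-1}$ coincides with the fixed set $\{u : u' = u\}$ of the main involution (as one verifies directly from $m a_t = a_t m$).

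The first step will be to identify the limits of these two invariants. From the expansions $A = \tfrac{1}{2}\bigl(e^R (a+b) + e^{-R}(a-b)\bigr)$ and $B = \tfrac{1}{2}\bigl(e^R(a+b) - e^{-R}(a-b)\bigr)$, the Vahlen constraint $|A|^2 - |B|^2 = 1$, and the direct computation $|A|^2 = \tfrac{1}{4}\bigl(e^{2R}|a+b|^2 + 2 + e^{-2R}|a-b|^2\bigr)$, one reads off $A/|A| = (a+b)/|a+b| + O(e^{-2R})$ and, analogously, $B/|B| = (a+b)/|a+b| + O(e^{-2R})$. Hence both invariants converge as $R \to \infty$ to $\mathfrak{u} := (a+b)/|a+b|$, which by the Iwasawa formula \eqref{Iwa1} is precisely the representative of $\kappa(g)$ in $\Gamma^0_{0,n-1}$.

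The second step is a compactness argument. Because $P_\sigma$ commutes with $\tau(m)$ for every $m \in M$, the pairing $\Phi(k_1,k_2) := \tau^{-1}(k_2)\, P_\sigma\, \tau^{-1}(k_1)$ descends to a continuous function on the compact quotient $K \times_M K$. For any sequence $R_n \to \infty$, extract by compactness a subsequence along which $(u_1(R_n), u_2(R_n))\cdot M$ converges to some $(u_1^\infty, u_2^\infty)\cdot M$. Continuity of the Vahlen operations on $K \times K$ forces $u_1^\infty u_2^\infty = \mathfrak{u} = u_1^\infty (u_2^\infty)'$, hence $(u_2^\infty)' = u_2^\infty$, so $u_2^\infty \in M$, and $u_1^\infty = \mathfrak{u}\cdot (u_2^\infty)^{-1}$. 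Thus the class $(u_1^\infty, u_2^\infty)\cdot M$ equals $(\mathfrak{u}, e)\cdot M = (\kappa(g), e)\cdot M$, independently of the subsequence, so continuity of $\Phi$ yields
$$\tau^{-1}(k_2(g a_R))\, P_\sigma\, \tau^{-1}(k_1(g a_R)) \longrightarrow \Phi(\kappa(g), e) = P_\sigma\, \tau^{-1}(\kappa(g)),$$
as claimed.

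The main obstacle I expect is the Clifford-algebra bookkeeping for the main involution $'$: one must make sure both that the two invariants $u_1 u_2$ and $u_1 u_2'$ are genuinely $M$-invariant (using $m' = m$) and that the identification $M = \{u : u' = u\} \cap \Gamma^0_{0,n-1}$ is correctly used to conclude $u_2^\infty \in M$ in the limit. Once these are in place, the convergence reduces to the elementary asymptotic expansion of $A/|A|$ and $B/|B|$ and the compactness step above.
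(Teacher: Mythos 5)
Your proposal is correct, and it reaches the lemma by a genuinely different route than the paper. The paper also works in the Vahlen realization, but it only ever exploits the single $M$-invariant $k_1k_2$ read off from \eqref{Cart1}: for $n$ even, $P_\sigma=\mathrm{Id}$ and the claim is exactly the convergence \eqref{key-fo1} of $\tau$ applied to that product, while for $n$ odd it writes $P_{\sigma_{n-1}^{\pm}}=\tfrac12(\mathrm{Id}\pm\gamma)$ in terms of the grading operator $\gamma$ and a commutation identity between $\gamma$ and $\tau_n(K)$ so as to reduce the mixed expression $\tau^{-1}(k_2)P_\sigma\tau^{-1}(k_1)$ back to the same product. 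You instead prove the stronger geometric statement that the Cartan pair $(k_1(ga_R),k_2(ga_R))$ converges to $(\kappa(g),e)$ in $K\times_M K$, by tracking the two invariants $u_1u_2$ and $u_1u_2'$, identifying their common limit with the representative of $\kappa(g)$ from \eqref{Iwa1}, and using $M=\{u\in\Gamma^0_{0,n-1}:u'=u\}$ to force the second coordinate of any subsequential limit into $M$; the operator statement then follows for \emph{any} $P_\sigma$ commuting with $\tau(M)$, with no appeal to the internal structure of $V_{\tau_n}$. What your route buys is uniformity in the parity of $n$ and independence from the explicit Clifford description of the projection (it in fact sidesteps the delicate commutation step in the paper's odd case); what it costs is the computation of the second invariant and the compactness/subsequence argument, which the paper avoids. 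The two points you flag as needing care — the $M$-invariance of $u_1u_2'$ via $m'=m$, and the characterization of $M$ as the fixed set of the main involution, verified from $ma_t=a_tm$ — are indeed the only nontrivial bookkeeping, and both check out.
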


   \begin{proof}
If $n$ is even, then $P_\sigma$ is the identity map and we have to prove the following identity
\begin{equation}\label{key-fo1}
\lim_{R\to\infty} \tau(k_2(ge^{RH_0})k_1(ge^{RH_0}))=\tau(\kappa(g)).
  \end{equation}
Write $g=\begin{pmatrix}
a&b\\
b'&a'	
\end{pmatrix}
$. In view of \eqref{Cart1} we have 
\begin{align*}
k_1(g{\rm e}^{RH_0})k_2(g{\rm e}^{RH_0})=
\begin{pmatrix}
\frac{a\cosh R+b\sinh R }{\mid a\cosh R +b\sinh R\mid} & 0\\
0&\frac{a'\cosh R+b'\sinh R}{\mid a'\cosh R+b'\sinh R\mid}
\end{pmatrix},
\end{align*}
from which we deduce \eqref{key-fo1} and the lemma follows for $n$ even.

 Suppose $n$ is odd and consider the case of $\sigma=\sigma_{n-1}^+$. Then for any $v\in V_{\tau_n}$ and any $k\in K$
 $$ \begin{aligned}
 P_{\sigma_{n-1}^+}\tau_n(k)v
 &= 
 \frac{1}{2}[
 \tau_n(k)v+\gamma(\tau_n(k)v)
 ]
 \\
&= 
 \frac{1}{2}[
 \tau_n(k)v+\tau_n(\alpha(k))\gamma(v)
 ]
  &= 
 \frac{1}{2}[
 \tau_n(k)v+\tau_n(k)\gamma(v)
 ].
  \end{aligned}
  $$
  Here $\gamma$ is the map  inducing the grading   $V_{\tau_n}=V_{\sigma_{n-1}^+}\oplus V_{\sigma^-_{n-1}}$ and $\alpha$ the main involution of the Clifford algebra.
  Applying this in our situation we get
  $$
  \begin{aligned}
 & \|
  \tau^{-1}_n(k_2(ge^{RH_0})  )P_{\sigma_{n-1}^+}\tau^{-1}_n(k_1(ge^{RH_0}))v - 
  P_{\sigma_{n-1}^+}\tau_n^{-1}(\kappa(g))v
  \|\\
  &  \leq \frac{1}{2}
   \|
    \tau^{-1}_n(k_2(ge^{RH_0}) )  \tau_n^{-1}(k_1(ge^{RH_0}))v  -\tau_n^{-1}(\kappa(g))v
   \|\\
   &+\frac{1}{2}
   \|
   \tau^{-1}_n(k_2(ge^{RH_0}) )\tau_n^{-1}(k_1(ge^{RH_0}))\gamma(v) -\tau_n^{-1}(\kappa(g))\gamma(v)
   \|.
  \end{aligned}$$
By \eqref{key-fo1}, the last two terms tend to 0 whenever $R\to\infty$. Thus 
$$
\lim_{R\to \infty} \|
  \tau^{-1}_n(k_2(ge^{RH_0})  )P_{\sigma_{n-1}^+}\tau^{-1}_n(k_1(ge^{RH_0}))v - 
  P_{\sigma_{n-1}^+}\tau_n^{-1}(\kappa(g))v
  \|\\=0.
$$
 The  proof in other case ($\sigma=\sigma_{n-1}^-$) is similar, then it is omitted.
\end{proof}

  %____________
 \pdfbookmark[1]{References}{ref}

\end{document}